\documentclass[12pt, leqno]{amsart}

\usepackage[OT2,T1]{fontenc}
\DeclareSymbolFont{cyrletters}{OT2}{wncyr}{m}{n}
\DeclareMathSymbol{\Sha}{\mathalpha}{cyrletters}{"58}

\usepackage{indentfirst}
\usepackage{amstext}
\usepackage{amsopn}
\usepackage{amsfonts}
\usepackage{amsmath}
\usepackage{latexsym}
\usepackage{amscd}
\usepackage{amssymb}
\usepackage{amsmath}
\usepackage[all,cmtip]{xy}
\usepackage{leftidx}
\usepackage{graphicx}
\usepackage{tikz}

\textwidth      =6in \oddsidemargin  =.25in \evensidemargin
=\oddsidemargin \font\teneufm=eufm10 \font\seveneufm=eufm7
\font\fiveeufm=eufm5
\newfam\eufmfam
\textfont\eufmfam=\teneufm \scriptfont\eufmfam=\seveneufm
\scriptscriptfont\eufmfam=\fiveeufm

\def\GG{\mathbb{G}}

\def\1{\mbox{\bf 1}}

\def\type{\mathrm{\bf type}}

%

\DeclareMathOperator{\Aut}{Aut}
\DeclareMathOperator{\Autext}{Autext}
\DeclareMathOperator{\Int}{Int}
 
\DeclareMathOperator{\Isom}{Isom}
\DeclareMathOperator{\Isomext}{Isomext}
\DeclareMathOperator{\Isomint}{Isomint}

 \DeclareMathOperator{\Id}{Id}

\DeclareMathOperator{\SL}{\rm SL}





\newtheorem{theorem}{Theorem}[subsection]

\newtheorem{claim}[theorem]{Claim}

\newtheorem{corollary}[theorem]{Corollary}

\newtheorem{lemma}[theorem]{Lemma}

\newtheorem{stheorem}{Theorem}[section]
\newtheorem{sclaim}[stheorem]{Claim}

\newtheorem{slemma}[stheorem]{Lemma}
\newtheorem{sproposition}[stheorem]{Proposition}
\newtheorem{sremark}[stheorem]{Remark}
\newtheorem{sremarks}[stheorem]{Remarks}

\newtheorem{fsremark}[theorem]{Remark}



\theoremstyle{definition}

\newtheorem{definition}[theorem]{Definition}

\numberwithin{equation}{section}


\def\ZZ{\mathbb{Z}}

\def\ad{\text{\rm ad}}

\def\ol{\overline}

\def\2int{\mathop{2\int}\nolimits}

\def\Spec{\mathop{\rm Spec}\nolimits}

\def\Gal{\mathop{\rm Gal}\nolimits}

\def\Int{\mathop{\rm Int}\nolimits}

\def\Aut{\text{\rm{Aut}}}

\def\Int{\mathop{\rm Int}\nolimits}

\def\Isom{\mathop{\rm Isom}\nolimits}
\def\Isomext{\mathop{\rm Isomext}\nolimits}

\def\resp.{\mathop{\rm resp.}\nolimits}

\def\limind{\mathop{\oalign{lim\cr
\hidewidth$\longrightarrow$\hidewidth\cr}}}

\def\lgr{\longrightarrow}

\font\math=cmmi10
\def\varpi{\hbox{\math\char'44}}

\def\simlgr{\buildrel\sim\over\lgr}

\def\pa{\S\kern.15em }

\def\un{\uppercase\expandafter{\romannumeral 1}}
\def\deux{\uppercase\expandafter{\romannumeral 2}}
\def\trois{\uppercase\expandafter{\romannumeral 3}}
\def\quatre{\uppercase\expandafter{\romannumeral 4}}
\def\cinq{\uppercase\expandafter{\romannumeral 5}}
\def\six{\uppercase\expandafter{\romannumeral 6}}

\def\hfl#1#2#3{\smash{\mathop{\hbox to#3{\rightarrowfill}}\limits
^{\scriptstyle#1}_{\scriptstyle#2}}}
\def\gfl#1#2#3{\smash{\mathop{\hbox to#3{\leftarrowfill}}\limits
^{\scriptstyle#1}_{\scriptstyle#2}}}

\begin{document}

\title[Maximal tori]{Examples of algebraic groups of type $G_2$ having same maximal tori}

\author{C. Beli}
\address{
   Institute of Mathematics Simion Stoilow of the Romanian Academy,
  Calea Grivitei 21,
 RO-010702 Bucharest, Romania.
}
\email{Beli.Constantin@imar.ro}

\author{P. Gille}
\address{
   Institute of Mathematics Simion Stoilow of the Romanian Academy,
  Calea Grivitei 21,
 RO-010702 Bucharest, Romania.
}
\thanks{C. Beli and P. Gille were supported by the Romanian IDEI project PCE$_{-}$2012-4-364 of the Ministry of National Education
CNCS-UEFISCIDI}
\email{pgille@imar.ro}

\author{T.-Y. Lee}
\address{Ecole Polytechnique F\'ed\'erale de Lausanne,
EPFL, Station 8,
CH-1015 Lausanne, Switzerland.
}
\email{ting-yu.lee@epfl.ch}
\date{\today}

\begin{abstract} Answering a question of A. Rapinchuk, we construct
examples of non-isomorphic  semisimple algebraic groups  $H_1$  and $H_2$ of type $G_2$
having coherently equivalent systems of maximal $k$-tori.

\medskip

\noindent {\em Keywords:} Octonions, tori, quadratic forms, Galois cohomology, homogeneous spaces.

\medskip

\noindent {\em MSC 2000:} 20G15, 17A75, 11E57, 20G41.
\end{abstract}

\maketitle

\bigskip

\hfill\hfill{\it To V.P. Platonov on  his 75th birthday}
\bigskip

\section{Introduction} \label{sec_intro}

Let $k$ be a field. We say that two semisimple algebraic groups $H_1$  and $H_2$ have
same maximal tori  if
each time there is an embedding
$\iota_1: T \to H_1$ of a maximal torus $T$, then there is an embedding
$\iota_2: T \to H_2$ and conversely. This defines an equivalence class
on isomorphism classes of semisimple algebraic $k$--groups.

There are variants of this equivalence relation.
For example, one may  require  additionnally that $H_1$ and $H_2$ are isomorphic over $\ol k$,
and we say then that $H_1$ and $H_2$ are of the same genus.
The genus of  inner groups of type $A$ over arithmetic fields
has been investigated by Chernousov-Rapinchuk-Rapinchuk \cite{CRR}.

One variant  is coarser, it is the same up to isogeny for $T$ and has been
studied by  Garibaldi-Rapinchuk \cite{GR}. We say then that $H_1$ and
$H_2$ have same tori up to isogeny.

One is finer, it is due to  Prasad-Rapinchuk \cite[def. 9.4]{PR}
and roughly speaking takes into account the Galois action on the root systems
$\Phi(H_1, \iota_1(T))$ and $\Phi(H_2, \iota_2(T))$, see below (\S \ref{sect_coherent}).
We say then that $H_1$ and $H_2$ have {\it coherently equivalent systems} of maximal $k$-tori;
it  has been investigated  over number fields ({\it loc. cit.}).

Garibaldi and Saltman constructed a field $F$ and  non-isomorphic  semisimple simply connected $F$--groups $H_1,H_2$  of type
$A_1$ such that  $H_1$ and $H_2$ have  coherently equivalent systems of maximal $F$-tori. It is written in terms of quadratic
subfields of quaternion algebras and translate easily in terms of maximal tori of relevant semisimple groups, see
Lemma \ref{lem_coh_A1}.

Inspired by this construction, we  construct  examples of  non-isomorphic  semisimple $F$--groups $H_1,H_2$  of type
$G_2$ such that  $H_1$ and $H_2$ have coherently equivalent systems of maximal $k$-tori.
This anwers a question raised by A. Rapinchuk. Note such an example cannot occur over a number field \cite[th. 7.5]{PR}.

The examples use big fields ``\`a la Merkurjev'' constructed from the theory of Pfister forms, which is the main issue of the appendix.

\medskip
\noindent{\bf Acknowledgements.} The authors are grateful to the referee for his/her  careful reading
and also for valuable comments.

\bigskip

\section{Groups having coherently equivalent systems of maximal tori}\label{sect_coherent}

Our goal here is to reformulate by the notion of oriented type \cite{G1,R}
of embeddings of maximal tori Prasad-Rapinchuk's definition of groups
having  coherently equivalent systems of maximal $k$-tori.
First we generalize the notion of coherent embeddings from semisimple connected absolutely simple  to arbitrary reductive groups.

\subsection{Coherent embeddings}

\begin{definition}\label{defi_coherent} Let  $H_1, H_2$ be reductive $k$--groups which are isomorphic over $k_s$.
We fix a $k_s$--isomorphism $\varphi^\sharp: H_{1,k_s} \simlgr H_{2,k_s}$.

\smallskip

\noindent (a) Let $T_1$ be a maximal $k$--torus of $H_1$ and let $\iota_1: T_1 \to H_1$ be the natural inclusion.
A $k$-embedding $\iota:T_1 \to H_2$  is  called
{\it coherent} (relative to $\varphi^\sharp$) if there exists a  $k_s$-isomorphism
$\varphi: H_{1,k_s} \simlgr H_{2,k_s}$  of the form $\varphi  = \Int(h) \circ \varphi^\sharp$, with $h \in H_2(k_s)$
 such that $\iota = \varphi \circ \iota_1$.

\smallskip

\noindent (b) We say that $H_1$ and $H_2$ have coherently equivalent
systems of maximal $k$-tori (with respect to
$\varphi^\sharp$) if every maximal $k$-torus
$\iota_1: T_1 \to H_1$  admits a coherent $k$-embedding into $H_2$ (relative to $\varphi^\sharp$), and every maximal $k$-torus
$\iota_2: T_2 \to H_2$ admits a coherent (relative to $(\varphi^\sharp)^{-1}$) $k$-embedding into $H_1$.

\end{definition}

For reductive $k$-groups $H_1$ and $H_2$, let $\Isom_k(H_1,H_2)$ be the scheme of isomorphisms from $H_1$ to $H_2$ and
$\Isomext_k(H_1,H_2)$ be the quotient scheme of $\Isom_k(H_1,H_2)$ by the adjoint group $\ad(H_1)$.
Namely $$\Isomext_k(H_1,H_2)=\Isom_k(H_1,H_2)/\ad(H_1)=\ad(H_2)\setminus\Isom_k(H_1,H_2)$$ \cite[XXIV.2]{SGA3}.
We denote by $\pi$ the canonical map from $\Isom_k(H_1,H_2)$ to $\Isomext_k(H_1,H_2)$.
A $k$-point of $\Isomext_k(H_1,H_2)$ is called an \emph{orientation} between $H_1$ and $H_2$.

Let $H_1$, $H_2$ and $\varphi^\sharp$ be as in Definition \ref{defi_coherent}.
In the following lemma, we show that whether $H_1$ and $H_2$ have coherently equivalent systems of maximal $k$-tori relative to
$\varphi^\sharp$ actually only depends on $\pi(\varphi^\sharp)$.

\begin{lemma}\label{lem_coherent}
 We keep the setting of Definition \ref{defi_coherent}.

\smallskip

\noindent (1) Assume that $H_1$ and $H_2$ have coherently equivalent systems of maximal $k$-tori relative to
$\varphi^\sharp$. Then $H_2$
is an inner form of $H_1$ and $\pi(\varphi^\sharp)$ is a $k$-point of $\Isomext_k(H_1,H_2)$.

\smallskip

\noindent (2) Let $\psi^\sharp,\ \varphi^\sharp: H_{1,k_s} \simlgr H_{2,k_s}$ be $k_s$--isomorphisms such
that $\pi(\psi^\sharp)=\pi(\varphi^\sharp)$. Then the following are equivalent:

\smallskip

(i) The $k$--groups  $H_1$ and $H_2$ have coherently equivalent systems of maximal $k$-tori with respect to
$\varphi^\sharp$;

\smallskip

(ii) The $k$--groups $H_1$ and $H_2$ have coherently equivalent systems of maximal $k$-tori with respect to
$\psi^\sharp$.
\end{lemma}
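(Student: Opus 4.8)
The plan is to prove both parts of Lemma \ref{lem_coherent} by unwinding Definition \ref{defi_coherent} and tracking how the coherence condition transforms under the maps $\pi$ and $\Int(h)$. The conceptual point is that the set of $k_s$-isomorphisms in the same $\pi$-fiber as $\varphi^\sharp$ is exactly the coset $\Int(H_2(k_s))\circ\varphi^\sharp$, so the existence of a coherent embedding is by construction a statement about that coset rather than about a chosen representative.

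For part (1), suppose $H_1$ and $H_2$ have coherently equivalent systems of maximal $k$-tori relative to $\varphi^\sharp$. First I would recall the standard fact that for any $\gamma\in\Gal(k_s/k)$, the map ${}^\gamma(\varphi^\sharp)\circ(\varphi^\sharp)^{-1}$ lies in $\Aut(H_{2,k_s})$, and the class of $\varphi^\sharp$ in $\Isomext$ is $k$-rational if and only if this cocycle takes values in the inner automorphisms $\ad(H_2)(k_s)$, i.e. if and only if $H_2$ is an inner $k_s/k$-form of $H_1$. So the two assertions in (1) are equivalent, and it suffices to show $H_2$ is an inner form. Here I would use that $H_1$ admits a maximal $k$-torus $T_1$ (existence of maximal $k$-tori for reductive $k$-groups), and by hypothesis there is a coherent $k$-embedding $\iota = \varphi\circ\iota_1$ with $\varphi=\Int(h)\circ\varphi^\sharp$, $h\in H_2(k_s)$. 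Applying $\gamma\in\Gal(k_s/k)$ and using that $\iota$ and $\iota_1$ are defined over $k$ gives ${}^\gamma\varphi\circ\iota_1 = \varphi\circ\iota_1$ after identifying via $\gamma$ on $T_1$; hence ${}^\gamma\varphi\circ\varphi^{-1}$ fixes the maximal torus $\iota(T_1)_{k_s}$ pointwise, so it is an element of $\ad(H_2)(k_s)$ (an automorphism fixing a maximal torus pointwise is inner, indeed lies in that torus). Since ${}^\gamma\varphi\circ\varphi^{-1}$ and ${}^\gamma(\varphi^\sharp)\circ(\varphi^\sharp)^{-1}$ differ by the inner automorphisms coming from $h$ and $\Int$, the cocycle attached to $\varphi^\sharp$ is inner-valued, proving $H_2$ is an inner form and $\pi(\varphi^\sharp)\in\Isomext_k(H_1,H_2)(k)$.

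For part (2), assume $\pi(\psi^\sharp)=\pi(\varphi^\sharp)$. By the definition of $\Isomext_k(H_1,H_2)$ as the quotient by $\ad(H_1)$ (equivalently by $\ad(H_2)$ acting on the left), the equality of $\pi$-values means $\psi^\sharp = \Int(g)\circ\varphi^\sharp$ for some $g\in\ad(H_2)(k_s)$; since $H_2$ is (by part (1), in the only case of interest) an inner form, one may further lift $g$ to an element $h_0\in H_2(k_s)$ with $\psi^\sharp = \Int(h_0)\circ\varphi^\sharp$ — or, if one prefers to avoid the lifting, argue directly with $\ad(H_2)(k_s)$ throughout, since Definition \ref{defi_coherent}(a) only ever uses $\varphi^\sharp$ up to precomposition by such inner automorphisms. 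Now the key observation is that an isomorphism of the form $\Int(h)\circ\psi^\sharp$ with $h\in H_2(k_s)$ is the same thing as $\Int(h)\circ\Int(h_0)\circ\varphi^\sharp = \Int(hh_0)\circ\varphi^\sharp$, and conversely every $\Int(h')\circ\varphi^\sharp$ equals $\Int(h'h_0^{-1})\circ\psi^\sharp$. Thus the family of admissible $\varphi$'s in Definition \ref{defi_coherent}(a) is literally the same set whether one uses $\varphi^\sharp$ or $\psi^\sharp$ as the reference isomorphism. Therefore a given $k$-embedding $\iota:T_1\to H_2$ is coherent relative to $\varphi^\sharp$ if and only if it is coherent relative to $\psi^\sharp$; the same applies symmetrically to $k$-embeddings $T_2\to H_1$ using $(\varphi^\sharp)^{-1}$ and $(\psi^\sharp)^{-1}$, noting $(\psi^\sharp)^{-1} = (\varphi^\sharp)^{-1}\circ\Int(h_0)^{-1} = \Int((\varphi^\sharp)^{-1}(h_0)^{-1})\circ(\varphi^\sharp)^{-1}$, which has exactly the same shape. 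Hence (i) $\Leftrightarrow$ (ii).

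The main obstacle I anticipate is the bookkeeping in part (1): one must be careful about the Galois descent identity, namely that a $k$-embedding $\iota_1:T_1\to H_1$ satisfies ${}^\gamma\iota_1 = \iota_1$ only after correctly transporting the Galois action across the chosen $k_s$-isomorphism, and one needs the precise statement that an automorphism of a reductive $k_s$-group fixing a maximal torus pointwise is inner (indeed lies in that torus) — this is where the hypothesis that $H_1$ actually \emph{has} a maximal $k$-torus, and that the coherent embedding lands on a maximal $k_s$-torus defined over $k$, is used. Part (2) is then essentially formal, a coset manipulation, once one notices that Definition \ref{defi_coherent}(a) is already phrased in a way that is manifestly invariant under replacing $\varphi^\sharp$ by an inner translate.
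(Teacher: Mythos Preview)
Your proposal is correct and follows essentially the same approach as the paper. For part (1) both you and the paper pick a maximal $k$-torus $T_1\subset H_1$, use the coherent embedding to produce $\varphi=\Int(h)\circ\varphi^\sharp$ over $k_s$, and then compute that $\varphi^{-1}\circ{}^{\sigma}\varphi$ (equivalently ${}^{\sigma}\varphi\circ\varphi^{-1}$) fixes the maximal torus pointwise, invoking the fact $\Aut(H_1,\Id_{T_1})=\ad(T_1)$ (the paper cites SGA3, Exp.~XXIV, Prop.~2.11 for this) to conclude the cocycle is inner and $\pi(\varphi^\sharp)=\pi(\varphi)$ is $k$-rational; for part (2) both arguments write $\varphi^\sharp=\Int(h_2)\circ\psi^\sharp$ and observe that the coset $\Int(H_2(k_s))\circ\varphi^\sharp$ is unchanged, so the notion of coherent embedding is literally the same for either reference isomorphism.
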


\begin{proof}
$(1)$ Let $T_1$ be a maximal torus of $H_1$ and $\iota_1:T_1\to H_1$ be the natural inclusion and
$\iota:T_1\to H_2$ be a coherent embedding relative to $\varphi^\sharp$.
By definition, we have  some $h\in H_2(k_s)$ and $\varphi = \Int(h) \circ \varphi^\sharp$ such that $\varphi\circ \iota_1=\iota$.

Let $\Gamma=\Gal(k_s/k)$. For $t\in T_1(k_s)$ and $\sigma\in\Gamma$, we have $\iota(\leftidx{^\sigma}t)=\leftidx{^\sigma}\iota(t),$
which implies that $\varphi(\leftidx{^\sigma}\iota_1(t))=\leftidx{^\sigma}(\varphi(\iota_1(t))$.
Let $\leftidx{^\sigma}\varphi=\sigma\circ\varphi\circ\sigma^{-1}$.
The above equality implies that for all $\sigma\in\Gamma$, we have $\varphi^{-1}\circ\leftidx{^\sigma}\varphi$
is trivial on $\iota_1(T_1)$.
Since $\Aut_k(H_1,\Id_{T_1})=\ad(T_1)$ (\cite{SGA3} Exp. XXIV, Prop. 2.11), the group $H_1$ is an inner form of $H_2$
and $\pi(\varphi)$ is a $k$-point
of $\Isomext_k(H_1,H_2)$. As $\pi(\varphi^\sharp)=\pi(\varphi)$, we have $\pi(\varphi^\sharp)\in\Isomext_k(H_1,H_2)(k)$.

\smallskip

\noindent $(2)$ Let $h_2\in H_2(k_s)$ such that $\Int(h_2)\circ\psi^\sharp=\varphi^\sharp$.

\smallskip

\noindent $(i) \Longrightarrow (ii)$. Suppose that $H_1$ and $H_2$ have
coherently equivalent systems of maximal $k$-tori with respect to
$\varphi^\sharp$.
Then for every maximal torus $T_1$ of $H_1$, there is a coherent embedding $\iota:T_1\to H_2$ relative to $\varphi^\sharp$.
Let $h\in H_2(k_s)$ such that $\varphi\circ \iota_1=\iota$, where $\varphi=\Int(h)\circ\varphi^\sharp$.
Then we have  $\Int(hh_2)\circ\psi^\sharp\circ \iota_1=\iota$. Hence $\iota$ is also a coherent embedding relative to $\psi^\sharp$.
Therefore $H_1$ and $H_2$ have coherently equivalent systems of maximal $k$-tori with respect to
$\psi^\sharp$.

 \smallskip

\noindent $(ii) \Longrightarrow (i)$.  It is enough to interchange the roles of $\varphi^\sharp$ and $\psi^\sharp$.

\end{proof}

By Lemma \ref{lem_coherent}.(1), two reductive groups $H_1$ and $H_2$ have
coherently equivalent systems of maximal $k$-tori with respect to
$\varphi^\sharp$ only if $\pi(\varphi^\sharp)$ is a  $k$--point of
 $\Isomext(H_1,H_2)$. Therefore, we consider the coherently equivalent systems of maximal $k$-tori with respect to $\varphi^\sharp$
  only when  $\pi(\varphi^\sharp)$  is a $k$--point of $\Isomext(H_1,H_2)$.

Moreover, given an orientation $v^\sharp \in \Isomext(H_1,H_2)(k)$,
we say that  $H_1$ and $H_2$ have coherently equivalent systems of maximal $k$-tori relative to $v^\sharp$
if there is some $\varphi^\sharp : H_{1,k_s} \simlgr  H_{2,k_s}$ such that $\pi(\varphi^\sharp)=v^\sharp$ and
$H_1$ and $H_2$ have coherently equivalent systems of maximal $k$-tori
relatively to $\varphi^\sharp$. In fact by Lemma \ref{lem_coherent} (2), the groups $H_1$ and $H_2$
have coherently equivalent systems of maximal $k$-tori relative to $v^\sharp$ if and only if for all $\varphi^\sharp : H_{1,k_s} \simlgr  H_{2,k_s}$ with $\pi(\varphi^\sharp)=v$, the groups $H_1$ and $H_2$ have coherently equivalent systems of maximal $k$-tori
relatively to $\varphi^\sharp$.

\begin{fsremark}\label{remark_abusive} {\rm
 If  $\Autext(H_1)=1$ (or equivalently $\Autext(H_2)=1$),  we  have that  $\Isomext(H_1,H_2)=\Spec(k)$ so that
there is a canonical orientation $v_{can} \in \Isomext(H_1,H_2)(k)$.
By some abuse of notation, we can say that
$H_1$ and $H_2$ have coherently equivalent systems of maximal $k$-tori if they
have coherently equivalent systems of maximal $k$-tori with respect to $v_{can}$.
This concerns semisimple groups of type $A_1$, $B_n$, $C_n$, $E_7$, $E_8$, $F_4$ and $G_2$.
}
\end{fsremark}

An immediate corollary is the following:

\begin{corollary}
Suppose that $H_1$ and $H_2$ have coherently equivalent systems of maximal $k$-tori relative to
$\varphi^\sharp$. Then $H_1$ and $H_2$ share the same quasi-split form.
\end{corollary}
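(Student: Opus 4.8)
The plan is to deduce this from Lemma \ref{lem_coherent}.(1) together with the fact that a reductive group and any of its inner forms share the same quasi-split form. First I would recall that by Lemma \ref{lem_coherent}.(1), the hypothesis that $H_1$ and $H_2$ have coherently equivalent systems of maximal $k$-tori relative to $\varphi^\sharp$ forces $H_2$ to be an inner form of $H_1$ and $\pi(\varphi^\sharp)$ to be a $k$-point of $\Isomext_k(H_1,H_2)$. So the whole statement reduces to the well-known fact that inner forms have the same quasi-split form.

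Next I would spell out that last fact in cohomological language. Fix a quasi-split form $H_0$ of $H_1$ over $k$ (equivalently of $H_{1,k_s}$), so that $H_1$ corresponds to a class $\xi_1 \in H^1(k, \Aut(H_0))$, and the image of $\xi_1$ in $H^1(k, \Out(H_0))$ records the $*$-action, i.e.\ the quasi-split inner form containing $H_1$. Saying that $H_2$ is an inner form of $H_1$ means precisely that $H_2$ corresponds to a class $\xi_2$ with the same image in $H^1(k, \Out(H_0))$ — this is exactly what the existence of a $k$-point of $\Isomext_k(H_1,H_2)$ encodes, since $\Isomext_k(H_1,H_2)$ is a torsor under $\bOut(H_1)$ and the $*$-action is read off from $\pi(\varphi^\sharp)$. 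Consequently $\xi_1$ and $\xi_2$ lift the same class in $H^1(k,\Out(H_0))$, and therefore the unique quasi-split inner form attached to that $\Out$-class is a common quasi-split form of both $H_1$ and $H_2$.

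I do not expect a genuine obstacle here: the corollary is essentially a repackaging of Lemma \ref{lem_coherent}.(1). The only point requiring a little care is matching conventions — namely that "inner form" as produced by the lemma (existence of a $k$-rational orientation) is the same notion that guarantees equality of the $*$-actions and hence of the quasi-split forms; this is standard and can be cited from \cite{SGA3} Exp. XXIV or from the general theory of forms of reductive groups. One should also note the statement is symmetric in $H_1$ and $H_2$, which is consistent since "being an inner form of" is an equivalence relation on reductive $k$-groups and "sharing the same quasi-split form" is then automatic.
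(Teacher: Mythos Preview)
Your proposal is correct and follows exactly the same approach as the paper: invoke Lemma \ref{lem_coherent}.(1) to conclude that $H_2$ is an inner form of $H_1$, then use the standard fact that inner forms share the same quasi-split form. The paper's proof is a single sentence to this effect, whereas you unpack the cohomological justification for the ``standard fact''; this extra detail is fine but not needed.
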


\begin{proof}
By Lemma \ref{lem_coherent}.(1), $H_1$ and $H_2$ are inner forms of each other, and hence share the same quasi-split form.
\end{proof}

\subsection{An equivalent definition}

Let $H$ be a reductive  $k$--group and   $H'$ be a quasi-split form
of $H$.  Let $(B',T')$ be a Killing couple of $H'$ and denote by $W'=N_{H'}(T')/T'$ the Weyl group of
$T'$. Let $\Psi'$ be the root datum $\Phi(H',T')$. For any maximal torus $T$ of $H$,
we let $\Phi(H,T)$ be the twisted root datum of $H$ with respect to $T$ \cite[Exp. XXII 1.9 and 1.10.]{SGA3}.
Denote by $W(T)=N_{H}(T)/T$ the Weyl group of $T$.
Let $\Isom_k(\Phi(H,T),\Psi')$ be the scheme of isomorphisms from $\Phi(H,T)$ to $\Psi'$ and let $\Isomext_k(\Phi(H,T),\Psi')$ be the quotient scheme
of $\Isom_k(\Phi(H,T),\Psi')$ by $W'$. Namely, we have
 \[
 \Isomext_k(\Phi(H,T),\Psi')=W'\backslash\Isom_k(\Phi(H,T),\Psi')=\Isom_k(\Phi(H,T),\Psi')/W(T).
 \]

There is a canonical isomorphism between $\Isomext_k(\Phi(H,T),\Psi')$ and $\Isomext_k(H,H')$ \cite[prop. 2.4]{Le}.
Therefore, given an orientation $v\in\Isomext_k(H,H')(k)$, we have a
corresponding $k$-point of $\Isomext_k(\Phi(H,T),\Psi')$, which is called an orientation between root data $\Phi(H,T)$ and $\Psi'$
and we still denote it by $v$.

Let $p$ be the canonical morphism from $\Isom_k(\Phi(H,T),\Psi')$ to $\Isomext_k(\Phi(H,T),\Psi')$.
Given an orientation $v\in\Isomext_k(\Phi(H,T),\Psi')(k)$, let $\Isomint_v(\Phi(H,T),\Psi')$ be the fiber of  $v$ under the morphism  $p$.
Note that $\Isomint_v(\Phi(H,T),\Psi')$ is a left $W'$-torsor.
Given an embedding $i: T \to H$, we defined its oriented type with respect
to $v$  as
$$\type_{v}(T,i)=\Isomint_v(\Phi(H,i(T)),\Psi') \in H^1(k,W')$$ (\cite[\S 2.2]{BGL}).

\begin{fsremark}{\rm If $H=H'$ and $v$ is induced by the identity map on $H$,
then for any maximal torus $T$ of $H'$ with the natural inclusion $i$, the map $T\to \type_{v}(T,i)$ is nothing but
 the composition of $\bigl(H'/N_{H'}(T')\bigr)(k) \xrightarrow{\varphi}  H^1\bigl(k,N_{H'}(T') \bigr) \to H^1(k,W')$ where
$\bigl(H'/N_{H'}(T')\bigr)(k)$ is seen as the set of maximal $k$--tori of $H'$ and  $\varphi$
denotes  the characteristic map. We recover then the definition of \cite{G1}.

Let us explain that compatibility. We are given a maximal $k$--torus  $T $ of $G'$, it is $k_s$--conjugated
to $T'$ so defines a $k$--point $x$ of $\bigl(H'/N_{H'}(T')\bigr)(k)$ \cite[\S 2.5]{G2} and furthermore
 $\varphi(x)$ is the class of the $N_{H'}(T')$-torsor $E=\mathrm{Transpst}_{H'}(T', T)$, that is the strict transporter of $T'$
 into $T$. The image of $\varphi(x)$ in $H^1(k,W')$ is then given by the class of the $W'$--torsor
 $F=E/T' = \mathrm{Transpst}_{H'}(T', T)/T'$. The point is that we have a natural $k$--morphism
 $\theta: E=\mathrm{Transpst}_{H'}(T', T) \to \mathrm{Isomint}_v( \Phi(G',T'), \Phi(G,T))$ which is $N'_{H'}(T')$--equivariant.
 Then it   induces a $W'$--map $\ol \theta : F \to \mathrm{Isomint}_v( \Phi(G',T'), \Phi(G,T))$. Since $F$ and
 $\mathrm{Isom}( \Phi(G',T'), \Phi(G,T))$ are $W'$--torsors, we conclude that $\ol \theta$ is an isomorphism of
 $W'$--torsors, whence the desired compatibility.
}
\end{fsremark}

We can interpret the coherently equivalent system of maximal tori in terms of orientation and types as follows.

\begin{definition}\label{def_same} Let  $H_1, H_2$ be reductive $k$--groups sharing the same quasi--split $k$--form $H'$
(equipped as before with $(B',T')$ and $W'$).
 Let $v\in\Isomext_k(H_1,H_2)(k)$, $v_2\in\Isomext_k(H_2,H')(k)$ and $v_1=v_2\circ v\in\Isomext_k(H_1,H')(k)$.
 We say that $H_1$, $H_2$ have the same oriented maximal $k$--tori with respect to
 $v$ if for each $\gamma \in H^1(k,W')$ the following are equivalent:

 \smallskip

 (i) There exists a $k$--embedding  $\iota_1: T \to H_1$ such that $\type_{v_1}(T,\iota_1) =\gamma$;

\smallskip

(ii) There exists a $k$--embedding $\iota_2: T \to H_1$ such that $\type_{v_2}(T,\iota_2) =\gamma$.

\end{definition}

\smallskip

\noindent
The above definition is independent of the choice of $v_2$.
To see this, we note that for a quasi-split group $H'$, there is  a section $s:\Autext(H')\to \Aut(H')$
whose image stabilizes the Killing couple $(B',T')$
(\cite{SGA3}, Exp. XXIV. 3.10). If we choose another orientation $u_2\in\Isomext(H_2,H')(k)$, then
there is $\alpha\in\Autext(H')(k)$ such that $\alpha\circ v_2=u_2$. Let $u_1=u_2\circ v$ and $a=s(\alpha)$.
Since $a$ stabilizes the Killing couple $(B',T')$, the automorphism $a$ induces an automorphism on $\Psi'$ and we denote it by $\ol{a}$.
The automorphism $\Int(\ol{a})$ of $W'$ induces an automorphism $\Int(\ol{a})^\ast:H^1(k,W')\to H^1(k,W').$
Let $\eta:\Isomint_{v_i}(\Phi(H_i,\iota_i(T)),\Psi')\to\Isomint_{u_i}(\Phi(H_i,\iota_i(T),\Psi')$ be defined
by $\eta(f)=\ol{a}\circ f$. Then $\Int(\ol{a})^\ast(\type_{v_i}(T,\iota_i))$ is nothing but  $\eta(\type_{v_i}(T,\iota_i))$.
Therefore, the definition does not depend on the choice of $v_2$.
Since a different choice of $H'$ will induce an one-to-one correspondence on types,
the definition does not depend on the choice of $H'$ either.

\smallskip

\begin{sproposition}\label{prop_orient}
Let $H_1, H_2,H'$ be reductive $k$--groups as in Definition \ref{def_same}.
Let $v\in\Isomext_k(H_1,H_2)(k)$.
 Then $H_1$ and $H_2$
have coherently equivalent systems of maximal $k$-tori relative to $v$
 if and only if $H_1,H_2$ have
the same oriented maximal $k$--tori with respect to the orientation $v$.
\end{sproposition}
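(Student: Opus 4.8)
The plan is to unwind both sides of the asserted equivalence into statements about the existence of $k$-embeddings of tori with prescribed oriented type, and then to match them using the compatibility between the two formulations of orientation. Concretely, I would argue that for a maximal $k$-torus $T_1 \subset H_1$ with inclusion $\iota_1$, a $k$-embedding $\iota: T_1 \to H_2$ is coherent relative to a choice $\varphi^\sharp$ with $\pi(\varphi^\sharp)=v$ if and only if $\type_{v_2}(T_1,\iota)=\type_{v_1}(T_1,\iota_1)$ in $H^1(k,W')$. Granting this, Definition~\ref{defi_coherent}(b) says every $\iota_1: T_1 \to H_1$ has some coherent $\iota: T_1 \to H_2$, i.e.\ every type realized on the $H_1$ side is realized on the $H_2$ side, and symmetrically; that is exactly the two implications (i)$\Leftrightarrow$(ii) of Definition~\ref{def_same}. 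So the whole proposition reduces to the italicized equivalence above, which is the technical heart.

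To prove that equivalence, I would work with the $N_{H'}(T')$-torsor (or $W'$-torsor after quotienting) description recalled in the Remark before Definition~\ref{def_same}. Fix $\varphi^\sharp: H_{1,k_s} \simlgr H_{2,k_s}$ with $\pi(\varphi^\sharp)=v$, lifted from $v_2$, $v_1=v_2\circ v$ as in the statement. First suppose $\iota$ is coherent: there is $h\in H_2(k_s)$ with $\varphi=\Int(h)\circ\varphi^\sharp$ and $\varphi\circ\iota_1=\iota$. The key observation is that $\varphi$ is then a $k$-isomorphism of the \emph{pair} $(H_1,\iota_1(T_1)) \simlgr (H_2,\iota(T_1))$ up to the ambiguity of $\Int(T_1)$ — more precisely $\varphi^{-1}\circ\leftidx{^\sigma}\varphi$ is trivial on $\iota_1(T_1)$ for all $\sigma$, as computed in the proof of Lemma~\ref{lem_coherent}(1). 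Hence $\varphi$ identifies the twisted root data $\Phi(H_1,\iota_1(T_1))$ and $\Phi(H_2,\iota(T_1))$ compatibly with Galois, and, because $\varphi$ has the same image as $\varphi^\sharp$ in $\Isomext$, it carries the orientation $v_1$ to $v_2$; composing with a trivialization of $\Phi(H_1,\iota_1(T_1))$ over $\Psi'$ in class $v_1$ gives a trivialization of $\Phi(H_2,\iota(T_1))$ in class $v_2$, and comparing the two $W'$-torsors $\Isomint_{v_1}(\Phi(H_1,\iota_1(T_1)),\Psi')$ and $\Isomint_{v_2}(\Phi(H_2,\iota(T_1)),\Psi')$ shows their classes in $H^1(k,W')$ agree. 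Conversely, given $\iota_2: T_1 \to H_2$ with $\type_{v_2}(T_1,\iota_2)=\type_{v_1}(T_1,\iota_1)$, I would produce a $k_s$-isomorphism $\varphi: H_{1,k_s}\simlgr H_{2,k_s}$ carrying $\iota_1$ to $\iota_2$ and with $\pi(\varphi)=v$: existence of \emph{some} such $\varphi$ follows because equality of types means the two $N_{H'}(T')$-torsors of trivializations become isomorphic after pushing to $W'$, and a diagram chase (using that $\Isom_k(\Phi(H_1,\iota_1(T_1)),\Psi')$ is a bitorsor and the canonical isomorphism $\Isomext_k(\Phi(H,T),\Psi')\cong\Isomext_k(H,H')$ of \cite[prop.~2.4]{Le}) lets one lift to an isomorphism of pairs inducing $v$; then $\varphi\circ(\varphi^\sharp)^{-1}\in\ad(H_2)(k_s)$, so $\varphi=\Int(h)\circ\varphi^\sharp$ for some $h\in H_2(k_s)$, making $\iota_2$ coherent.

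I expect the main obstacle to be the converse direction of that equivalence: upgrading the equality of classes in $H^1(k,W')$ into an actual $k_s$-isomorphism $\varphi$ of the pairs that both realizes $\iota_2$ \emph{and} lies in the prescribed orientation class $v$. The subtlety is that $\type$ lives in $H^1(k,W')$ after forgetting the $T'$-part, so one must check that the $T'$-extension can always be arranged — this uses that, for a fixed maximal torus $T_1$, the transporter $\Transpst_{H_2}(\iota_2(T_1)$-data$)$ is a torsor under a connected group so surjects appropriately, and that the $\Aut_k(H_1,\Id_{T_1})=\ad(T_1)$ computation from \cite{SGA3} Exp.~XXIV Prop.~2.11 pins down exactly the ambiguity. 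I would organize this as: (1) reduce the proposition to the boxed equivalence of types; (2) prove ``coherent $\Rightarrow$ equal types'' by transporting root data along $\varphi$; (3) prove ``equal types $\Rightarrow$ coherent'' via the bitorsor/transporter argument together with the identification $\Isomext_k(\Phi(H,T),\Psi')\cong\Isomext_k(H,H')$; and (4) conclude by quantifying over all maximal $k$-tori and invoking the symmetry between $H_1$ and $H_2$, noting that the independence of the choices of $v_2$ and $H'$ was already settled in the discussion following Definition~\ref{def_same}.
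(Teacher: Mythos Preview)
Your overall architecture is right, and your steps (2) and (3) contain the correct ideas, but the italicized equivalence on which you base the reduction is false as stated. You claim that a specific $k$-embedding $\iota:T_1\to H_2$ is coherent (relative to $\varphi^\sharp$ with $\pi(\varphi^\sharp)=v$) if and only if $\type_{v_2}(T_1,\iota)=\type_{v_1}(T_1,\iota_1)$. The forward implication is fine, but the converse fails: the type $\type_{v_2}(T_1,\iota)$ depends only on the image $\iota(T_1)$, so if $\alpha\in\Aut_k(T_1)$ is any $k$-automorphism, then $\iota$ and $\iota\circ\alpha$ have the same type; yet $\iota\circ\alpha$ is coherent only when $\alpha$ is realised by the Weyl group. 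Concretely, take $H_1=H_2=H$, $\varphi^\sharp=\id$, $\iota_1$ the inclusion of a split maximal torus $T_1$, and $\alpha\in\Aut_k(T_1)\setminus W(T_1)$ (such $\alpha$ exist already for $H=\SL_3$, since $\GL_2(\ZZ)$ is much larger than $S_3$). Then $\iota_2=\iota_1\circ\alpha$ has the same image, hence the same type, as $\iota_1$; but any coherent $\iota$ is of the form $\Int(h)\!\mid_{T_1}$ with $h\in N_H(T_1)(k_s)$, hence lies in the $W$-orbit of $\iota_1$, so $\iota_2$ is not coherent.

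What is true, and what suffices for the proposition, is the \emph{existential} version: given $\iota_1$, there exists a coherent $\iota:T_1\to H_2$ if and only if there exists an embedding $T_1\to H_2$ of the same oriented type. The paper organises exactly this via an intermediate step (its Lemma preceding the proof): equality of types is rephrased as the existence of a $k$-isomorphism $\theta:\Phi(H_1,\iota_1(T_1))\to\Phi(H_2,\iota_2(T_1))$ with orientation $v$. For the direction ``same oriented $\Rightarrow$ coherently equivalent'' one then lifts $\theta$ to some $\varphi\in\Isom(H_1,T_1;H_2,\iota_2(T_1))(k_s)$ with $\varphi\!\mid_{T_1}=\theta$ and $\pi(\varphi)=v$, and takes the \emph{new} embedding $\iota:=\varphi\circ\iota_1$ as the coherent one --- one does not attempt to certify that the original $\iota_2$ is coherent. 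Your worry about ``arranging the $T'$-extension'' is therefore misplaced: no splitting of $N_{H'}(T')\to W'$ is needed, only the standard fact that a $k$-isomorphism of root data lifts to a $k_s$-isomorphism of the groups. Rewriting your plan with the existential equivalence (and letting step (3) output $\varphi\circ\iota_1$ rather than $\iota_2$) makes it go through, and then it coincides with the paper's argument.
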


We first prove the following lemma.

\begin{lemma}\label{lem_orient}
Let $H_1, H_2,H'$ be reductive $k$--groups as in Definition \ref{def_same}.
Let $v\in\Isomext_k(H_1,H_2)(k)$.
Then the following are equivalent:

\smallskip

\noindent (1) The $k$--groups  $H_1,H_2$ have
the same oriented maximal $k$--tori with respect to the orientation $v$;

\smallskip

\noindent (2) For every $k$--torus $T$ of rank $\mathrm{rank}(H')$ and every embedding $\iota_1:T \to H_1$,
there exist an embedding $\iota_2:T\to H_2$ and a $k$-isomorphism
$\theta:\Phi(H_1,\iota_1(T))\to \Phi(H_2,\iota_2(T))$ with orientation $v$; and
for every $k$--torus $T$ of rank $\mathrm{rank}(H')$ and every embedding $\iota_2:T\to H_2$, there exist an
embedding $\iota_1:T\to H_1$ and a $k$-isomorphism
$\theta:\Phi(H_1,\iota_1(T))\to \Phi(H_2,\iota_2(T))$ with orientation $v$.
\end{lemma}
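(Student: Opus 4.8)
The plan is to prove Lemma \ref{lem_orient} by unwinding both conditions into the common language of $W'$-torsors attached to embeddings of maximal tori, and showing that the two conditions say exactly the same thing about which classes in $H^1(k,W')$ are realized. Recall that for an embedding $\iota_i : T \to H_i$ the oriented type $\type_{v_i}(T,\iota_i)$ is, by definition, the class in $H^1(k,W')$ of the $W'$-torsor $\Isomint_{v_i}(\Phi(H_i,\iota_i(T)),\Psi')$, where $v_1 = v_2 \circ v$. The first thing I would record is the following elementary observation: given two embeddings $\iota_1 : T \to H_1$ and $\iota_2 : T \to H_2$ with the \emph{same} source $T$, a $k$-isomorphism $\theta : \Phi(H_1,\iota_1(T)) \simlgr \Phi(H_2,\iota_2(T))$ carrying orientation $v$ exists if and only if $\type_{v_1}(T,\iota_1) = \type_{v_2}(T,\iota_2)$ in $H^1(k,W')$. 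Indeed, composition with such a $\theta$ gives a $k$-isomorphism of $W'$-torsors $\Isomint_{v_1}(\Phi(H_1,\iota_1(T)),\Psi') \to \Isomint_{v_2}(\Phi(H_2,\iota_2(T)),\Psi')$ — this uses that the orientations $v_1$ and $v_2$ are compatible via $v$, i.e. $v_1 = v_2 \circ v$, so that the two $\Isomint$ layers sit over matching orientations — and conversely an isomorphism of these $W'$-torsors produces, via the identification of $\Isom_k(\Phi(H_i,\iota_i(T)),\Psi')$ with an $\Isom$-scheme, a $k$-isomorphism $\theta$ of root data with orientation $v$. I would phrase this as a short sub-claim and prove it by chasing the $W'$-torsor structures, citing \cite[prop. 2.4]{Le} and \cite[\S 2.2]{BGL} for the identifications already set up in the text.

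With that sub-claim in hand, the equivalence $(1) \Leftrightarrow (2)$ is essentially bookkeeping. For $(2) \Rightarrow (1)$: suppose $\gamma \in H^1(k,W')$ and $\iota_1 : T \to H_1$ realizes $\gamma = \type_{v_1}(T,\iota_1)$ for some $k$-torus $T$ (necessarily of rank $\mathrm{rank}(H')$, since a maximal torus of $H_1$ has that rank). By $(2)$ applied to $\iota_1$ there are $\iota_2 : T \to H_2$ and an isomorphism $\theta$ of the root data with orientation $v$; by the sub-claim this forces $\type_{v_2}(T,\iota_2) = \type_{v_1}(T,\iota_1) = \gamma$, so $\iota_2$ realizes $\gamma$ for $H_2$. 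The reverse realization is handled by the symmetric half of $(2)$, giving condition (i) $\Leftrightarrow$ (ii) of Definition \ref{def_same}, i.e. $(1)$. For $(1) \Rightarrow (2)$: take any $k$-torus $T$ of rank $\mathrm{rank}(H')$ and any embedding $\iota_1 : T \to H_1$; set $\gamma := \type_{v_1}(T,\iota_1) \in H^1(k,W')$. By $(1)$ there is an embedding $\iota_2 : T \to H_2$ with $\type_{v_2}(T,\iota_2) = \gamma$, and then the sub-claim produces the desired $\theta$ with orientation $v$. Again the symmetric statement follows by interchanging the roles of $H_1$ and $H_2$ (replacing $v$ by its inverse, and noting $v_2$ and $v_1 = v_2 \circ v$ swap roles consistently).

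The main obstacle — and the only place requiring genuine care rather than formal manipulation — is the sub-claim, specifically checking that the orientation data is respected under composition with $\theta$. One has to verify that when $v_1 = v_2 \circ v$, composing an element of $\Isomint_{v_1}(\Phi(H_1,\iota_1(T)),\Psi')$ (an isomorphism to $\Psi'$ lying over $v_1$) with $\theta^{-1}$ (which lies over $v$, in the appropriate $\Isomext$-sense) lands in $\Isomint_{v_2}(\Phi(H_2,\iota_2(T)),\Psi')$ — that is, the ``$v$-twist'' composes correctly with the ``$v_2$-twist'' to give the ``$v_1$-twist.'' This is exactly the cocycle-compatibility of orientations under composition in $\Isomext$, and it is essentially the functoriality of the canonical identification $\Isomext_k(\Phi(H,T),\Psi') \cong \Isomext_k(H,H')$ from \cite[prop. 2.4]{Le}; I would state it cleanly and dispatch it by a diagram chase on the $W'$- and $W(T)$-torsor structures, taking care that $\theta$ being a $k$-isomorphism (not merely $k_s$) is what makes the resulting torsor isomorphism defined over $k$. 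Everything else is immediate from the definitions and the rank remark.
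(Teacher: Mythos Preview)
Your proposal is correct and follows essentially the same approach as the paper's proof: both directions hinge on the observation that, for embeddings $\iota_1,\iota_2$ with common source $T$, the equality $\type_{v_1}(T,\iota_1)=\type_{v_2}(T,\iota_2)$ in $H^1(k,W')$ is equivalent to the existence of a $k$-isomorphism $\theta:\Phi(H_1,\iota_1(T))\to\Phi(H_2,\iota_2(T))$ with orientation $v$, via the correspondence between $W'$-torsor isomorphisms and right-composition by such a $\theta$. The only organizational difference is that you isolate this as a standalone sub-claim before treating both implications, whereas the paper establishes the two halves of the equivalence inside the respective directions $(1)\Rightarrow(2)$ and $(2)\Rightarrow(1)$; the content and the care-point (compatibility of orientations under $v_1=v_2\circ v$) are identical.
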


\begin{proof}
Let $v_1$ and $v_2$ be as in Definition \ref{def_same}.

\smallskip

\noindent $(1) \Longrightarrow (2)$.
We suppose that $H_1,H_2$ have
the same oriented maximal $k$--tori with respect to the orientation $v$.
Given $(T,\iota_1)$, there is $(T,\iota_2)$ such that $\type_{v_1}(T,\iota_1)=\type_{v_2}(T,\iota_2) \in H^1(k,W')$.
In other words, there is an isomorphism of $W'$-torsors
$$\eta:\Isomint_{v_2}(\Phi(H_2,\iota_2(T)),\Psi')\simlgr \Isomint_{v_1}(\Phi(H_1,\iota_1(T)),\Psi').$$
Let $f\in\Isomint_{v_2}(\Phi(H_2,\iota_2(T)),\Psi')(k_s)$ and let $\theta=f^{-1}\circ \eta(f)$.
Since $\eta$ is a $k$-isomorphism of $W'$-torsors, we have $\eta(f')=f'\circ\theta$ for all $f'\in\Isomint_{v_2}(\Phi(H_2,\iota_2(T)),\Psi')(k_s)$
and $\theta:\Phi(H_1,\iota_1(t))\to \Phi(H_2,\iota_2(t))$ defined over $k$. From our construction, it is clear that $\theta$ is with orientation $v$.


\smallskip

\noindent $(2) \Longrightarrow (1)$. Let $\gamma\in H^1(k,W')$. Assume that there exists a $k$--embedding $\iota_1:T \to H_1$
such that $\type_{v_1}(T,\iota_1)=\gamma$.
 Our assumption provides a $k$-embedding $\iota_2:T\to H_2$ and a $k$-isomorphism
$\theta:\Phi(H_1,\iota_1(T))\to \Phi(H_2,\iota_2(T))$ with orientation $v$.
Since $v_1=v_2\circ v$,
it induces an isomorphism of $W'$--torsors
$$
\Isomint_{v_2}(\Phi(H_2,\iota_2(T)),\Psi') \simlgr \Isomint_{v_1}(\Phi(H_1,\iota_1(T)),\Psi')
$$
so that $\type_{v_2}(T,\iota_2)=\gamma$.
By interchanging the roles of $H_1$ and $H_2$, we get (1).
\end{proof}

\smallskip

We can proceed to the proof of Proposition \ref{prop_orient}.

\begin{proof}
Suppose that $H_1,H_2$ have the same oriented maximal $k$--tori with respect to the orientation $v$.
Fix a group isomorphism $\varphi^\sharp:H_{1,k_s}\xrightarrow{\sim} H_{2,k_s}$ such that $\pi(\varphi^\sharp)=v$.
Let $T$ be a maximal torus of $H_1$ and $\iota_1$ is the natural inclusion.
By Lemma \ref{lem_orient}, there is an embedding $\iota_2:T\to H_2$ and a $k$-isomorphism $\theta:\Phi(H_1,\iota_1(T))\to \Phi(H_2,\iota_2(T))$ with orientation $v$.
Let $\Isom(H_1,T;H_2,\iota_2(T))$ be the scheme of isomorphisms from $H_1$ to $H_2$ which send $T$ to $\iota_2(T)$. (For the notation, see \cite{SGA3} Exp. XXIV, \S 2.)
Let $\varphi\in\Isom(H_1,T;H_2,\iota_2(T))(k_s)$ be a lifting of $\theta$, i.e. $\varphi|_{T}=\theta$.
Let $\iota=\varphi\circ \iota_1$.
As $\theta$ is defined over $k$, the isomorphism $\varphi|_{T}$ is an $k$-isomorphism between $T$ and $\iota_2(T)$.
Hence $\iota:T\to H_2$ is a $k$-embedding.
From our construction, we have $\pi(\varphi)=v=\pi(\varphi^\sharp)$.
Therefore there is $h_2\in H_2(k_s)$ such that $\varphi=\Int(h_2)\circ\varphi^\sharp$ and the embedding $\iota$ is coherent to $\varphi^\sharp$.

On the other hand, given a maximal $k$--torus $T$ of $H_2$ and $\iota_2$ be the natural inclusion, the same argument as above also works.
Therefore $H_1$ and $H_2$ have coherently equivalent systems of maximal $k$-tori relative to $v$.

Suppose that $H_1$ and $H_2$ have coherently equivalent systems of maximal $k$-tori relative to $v$.
Let $T$ be a torus and $\iota_1:T\to H_1$ be an embedding. Since the type only depends on the image of $\iota_1$,
we can identify $T$ with $\iota_1(T)$ and let $\iota_1$ be the natural inclusion.
Let $\iota:T\to H_2$ be a coherent embedding and $\varphi=\Int(h_2)\circ\varphi^\sharp$ such that $\varphi\circ \iota_1=\iota$.
Since $\varphi|_{T}=\iota$, the isomorphism $\varphi$ induces a $k$-isomorphism $\theta:\Phi(H_1,T)\to\Phi(H_2,\iota(T))$.
Since $\pi(\varphi)=\pi(\varphi^\sharp)=v$, the map $\theta$ is with orientation $v$.

Given a $k$--torus $T$ and an embedding $\iota_2:T\to H_2$, the same argument also works for $H_2$. By Lemma \ref{lem_orient},
the groups $H_1$ and $H_2$ have the same oriented maximal $k$-tori with respect to the orientation $v$.
\end{proof}

\section{Applications to groups of type $G_2$}

We start with the $A_1$-case.

\begin{slemma}
\label{lem_coh_A1} Let $Q_1$ and $Q_2$ be two quaternion algebras over $k$ and put $H_i=\SL_1(Q_i)$ for $i=1,2$.
Then the following are equivalent:

\smallskip

\noindent (1)  For each quadratic \'etale algebra $k'$, $Q_1 \otimes_k k'$ splits if and only if $Q_2 \otimes_k k'$ splits.

\smallskip

\noindent (2) The $k$--groups $H_1$ and $H_2$ have coherently equivalent systems
of maximal $k$-tori (in the sense of Remark \ref{remark_abusive}).

\smallskip

\noindent (3) The $k$--groups $H_1$ and $H_2$ have the same maximal tori.

\end{slemma}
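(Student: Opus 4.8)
The plan is to prove that each of the three conditions is equivalent to the single algebraic statement, which I abbreviate $(\ast)$: for every quadratic \'etale $k$-algebra $k'$, the algebra $Q_1\otimes_k k'$ is split if and only if $Q_2\otimes_k k'$ is split. Since $(\ast)$ is condition (1) itself, it suffices to check $(1)\Longleftrightarrow(3)$ and $(2)\Longleftrightarrow(3)$. The bridge in both is the classical dictionary between maximal $k$-tori of $\SL_1(Q)$ and quadratic \'etale $k$-subalgebras of $Q$, which I would set up first.

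For a quaternion $k$-algebra $Q$ I would record three standard facts. (a) A quadratic \'etale $k$-subalgebra $k'\subseteq Q$ produces the torus $T_{k'}:=\SL_1(Q)\cap(k')^\times$, which coincides with $\ker\bigl(N_{k'/k}\colon R_{k'/k}(\mathbb{G}_m)\to\mathbb{G}_m\bigr)$ because the reduced norm of $Q$ restricts to $N_{k'/k}$ on $k'$; being one-dimensional, $T_{k'}$ is a maximal torus of $\SL_1(Q)$, and conversely every maximal $k$-torus of $\SL_1(Q)$ is obtained this way, with $k'$ its own centralizer in $Q$. (b) A quadratic \'etale $k$-algebra $k'$ embeds in $Q$ as a $k$-algebra if and only if $Q\otimes_k k'$ is split --- for $k'$ a field this is the classical splitting criterion for quaternion algebras, and for $k'=k\times k$ both conditions say exactly that $Q$ is split. (c) One-dimensional $k$-tori are classified up to isomorphism by the Galois action on their cocharacter lattice, equivalently by a quadratic \'etale $k$-algebra, so every one-dimensional $k$-torus is isomorphic to some $T_{k'}$, and $T_{k'}\simeq T_{k''}$ iff $k'\simeq k''$. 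Combining (a)--(c), an abstract one-dimensional $k$-torus $T$ admits a $k$-embedding into $\SL_1(Q)$ exactly when $T\simeq T_{k'}$ for some quadratic \'etale $k'$ with $Q\otimes_k k'$ split. Comparing this for $Q_1$ and $Q_2$ gives $(1)\Longleftrightarrow(3)$.

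Next I would prove $(2)\Longleftrightarrow(3)$. The implication $(2)\Rightarrow(3)$ is formal, as a coherent embedding is in particular an embedding. For $(3)\Rightarrow(2)$ I would exploit that $H_1$ and $H_2$ are of type $A_1$, hence inner forms of the quasi-split group $H'=\SL_2$, with $\Autext(H')=1$; by Remark \ref{remark_abusive} coherent equivalence is taken with respect to the canonical orientation $v_{can}$, and by Proposition \ref{prop_orient} it is equivalent to $H_1$ and $H_2$ having the same oriented maximal $k$-tori. Here $W'=N_{H'}(T')/T'\simeq\mathbb{Z}/2\mathbb{Z}$, so oriented types lie in $H^1(k,W')=H^1(k,\mathbb{Z}/2\mathbb{Z})$, once more the set of quadratic \'etale $k$-algebras. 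The key point is the computation that for any $k$-embedding $\iota\colon T_{k'}\to\SL_1(Q)$ the twisted root datum $\Phi(\SL_1(Q),\iota(T_{k'}))$ differs from $\Psi'=\Phi(\SL_2,T')$ only through the Galois action on the pair of roots $\{\pm\alpha\}$, which is the quadratic character corresponding to $k'$; hence the $\mathbb{Z}/2\mathbb{Z}$-torsor $\Isomint_{v_{can}}(\Phi(\SL_1(Q),\iota(T_{k'})),\Psi')$ has class $[k']$, that is $\type_{v_{can}}(T_{k'},\iota)=[k']$, independently of $Q$ and of the chosen embedding (compare \cite{G1,BGL}); conversely an embedding of oriented type $[k']$ must have source $T_{k'}$. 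Consequently the set of oriented types realized by maximal $k$-tori of $\SL_1(Q)$ is precisely the set of classes $[k']$ with $k'$ quadratic \'etale and $Q\otimes_k k'$ split, so ``$H_1$ and $H_2$ have the same oriented maximal $k$-tori'' is again $(\ast)$. With the previous paragraph this yields $(1)\Longleftrightarrow(2)\Longleftrightarrow(3)$.

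I expect the only genuinely non-formal step to be the rank-one oriented-type computation above --- namely that the oriented type of a maximal torus embedding into $\SL_1(Q)$ both determines and is determined by the isomorphism class of the torus --- together with the careful (though classical) bookkeeping behind the dictionary (a)--(c), in particular the degenerate case $k'=k\times k$ and uniformity in the characteristic. Everything else is formal manipulation in $H^1(k,\mathbb{Z}/2\mathbb{Z})$ and appeals to Remark \ref{remark_abusive} and Proposition \ref{prop_orient}.
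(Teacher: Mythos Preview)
Your proposal is correct and follows essentially the same route as the paper: both reduce condition (2) to ``same oriented maximal $k$-tori'' via Proposition \ref{prop_orient}, identify $H^1(k,W')=H^1(k,\ZZ/2\ZZ)$ with quadratic \'etale $k$-algebras, and use the dictionary between maximal $k$-tori of $\SL_1(Q)$ and quadratic \'etale subalgebras (together with the splitting criterion) to match oriented types with splitting fields. The only difference is organizational---the paper runs the cycle $(1)\Rightarrow(2)\Rightarrow(3)\Rightarrow(1)$ while you prove $(1)\Leftrightarrow(3)$ and $(2)\Leftrightarrow(3)$---and you spell out the background facts (a)--(c) more explicitly than the paper does.
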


\begin{proof} By Proposition \ref{prop_orient}, (2) is the same than $H_1$ and $H_2$
have same oriented maximal $k$--tori.

\smallskip

\noindent $(1) \Longrightarrow (2)$. The Weyl group of $\SL_2$ is $\ZZ/2\ZZ$.
Let  $\gamma$ be a class in $H^1(k,\ZZ/2\ZZ)$, that is  the isomorphism class of  a quadratic \'etale $k$--algebra $k'$.
We assume that there exists a maximal $k$--torus embedding $\iota_1: T \to H_1=\SL_1(Q_1)$ of type $[k']$.
Then $T=R^1_{k'/k}(\GG_m)$ and $\iota_1$ arises from a $k$--algebra map $k' \to Q_1$.
Then $k'$ splits $Q_1$, it splits $Q_2$ as well according to our assumption. Hence there exists a $k$--algebra map $k' \to Q_2$.
This gives rise to a  maximal $k$--torus embedding $\iota_2: T \to H_2=\SL_1(Q_2)$ of type $[k']$.
This shows that $H_1$ and $H_2$ have same oriented maximal $k$--tori.

\smallskip

\noindent $(2) \Longrightarrow (3)$. Obvious.

\smallskip

\noindent $(3) \Longrightarrow (1)$. Let $k'$ be a quadratic \'etale $k$--algebra and assume that $k'$ splits $Q_1$.
We put $T=R^1_{k'/k}(\GG_m)$ and we have seen that it implies that there is a
maximal $k$--torus embedding $\iota_1: T \to H_1=\SL_1(Q_1)$
Our assumption implies that there is a
maximal $k$--torus embedding $\iota_2: T \to H_2=\SL_1(Q_2)$. Since $k'$ splits $T$, it follows that $k'$ splits $H_2$ and
then splits $Q_2$.
We have shown that $Q_1 \otimes_k k'$ splits if and only if $Q_2 \otimes_k k'$ splits.
\end{proof}

Together with Proposition \ref{prop_pure}, Lemma \ref{lem_coh_A1} provides examples of non-isomorphic
semisimple simply connected groups of type $A_1$ having coherently equivalent systems
of maximal $k$-tori.
We come now to the octonionic case.

\begin{sproposition}\label{prop_cd3}
There exists a field $F$ with two octonions $F$--algebras $C_1$, $C_2$
satisfying the following conditions:

\smallskip

(1) $F$ is $2$--special and $\mathrm{cd}(F)=3$;

\smallskip

(2) $C_1$ and $C_2$ are non-isomorphic and both non-split.

\smallskip

(3) The $F$--groups $H_1=\Aut(C_1)$ and $H_2=\Aut(C_2)$
have coherently equivalent systems of maximal $F$-tori (in the sense of Remark \ref{remark_abusive}).

\end{sproposition}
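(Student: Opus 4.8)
The plan is to mimic the $A_1$-construction of Garibaldi--Saltman at the level of octonion algebras, using the dictionary between octonions and $3$-Pfister forms together with the bijection between maximal tori of $\Aut(C)$ and certain étale subalgebras of $C$. Recall that an octonion algebra $C$ over $F$ is determined by its norm form $n_C$, a $3$-fold Pfister form, and that $G_2$-groups $\Aut(C)$ are exactly the automorphism groups of octonion algebras. By Remark \ref{remark_abusive} the notion of coherently equivalent systems of maximal $F$-tori for $H_i=\Aut(C_i)$ is unambiguous (type $G_2$ has $\Autext=1$), and by Proposition \ref{prop_orient} it amounts to $H_1$ and $H_2$ having the same \emph{oriented} maximal $F$-tori, i.e. realizing the same classes in $H^1(F,W)$ where $W=W(G_2)$ is the Weyl group of $G_2$. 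The first step is therefore to make explicit, for a fixed octonion algebra $C$, which classes $\gamma\in H^1(F,W)$ arise as $\type(T,\iota)$ of a maximal torus embedding $\iota\colon T\to\Aut(C)$: a maximal torus of $\Aut(C)$ corresponds to a maximal étale commutative subalgebra, and the relevant combinatorial data (the torus $T$ up to the $W$-action, i.e. its oriented type) is controlled by how the cubic/quadratic étale subalgebras of $C$ split $n_C$. This reduces condition (3) to a purely Pfister-form statement: $n_{C_1}$ and $n_{C_2}$ should become isotropic (hyperbolic) over exactly the same étale algebras in the relevant families of degrees dividing the torus rank.

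Next I would produce the field $F$. The idea is a Merkurjev-style iterated function-field construction (``à la Merkurjev''), which the excerpt flags as the content of the appendix: start with two anisotropic non-similar $3$-Pfister forms $q_1,q_2$ over some base field $k_0$ and pass to a large field $F$ obtained by successively adjoining function fields of quadrics (and more generally of the varieties governing torus embeddings) so as to \emph{force} the equivalence ``$q_1$ isotropic over $L$ $\iff$ $q_2$ isotropic over $L$'' for all $L$ in the relevant families, while \emph{never} killing $q_1$, $q_2$ themselves or making them similar, and keeping the cohomological dimension controlled. Concretely, $F$ should satisfy (1): iterating function fields of Pfister quadrics preserves the property of being $2$-special, and a careful bookkeeping of the forms one is allowed to split keeps $\mathrm{cd}_2(F)=3$ (one never adjoins anything that would trivialize a symbol forcing $\mathrm{cd}\ge 3$, and Pfister-form function fields raise cohomological dimension by at most the expected amount). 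Then $C_1,C_2$ are the octonion $F$-algebras with norm forms $(q_1)_F,(q_2)_F$; (2) is exactly anisotropy and non-similarity of these over $F$, preserved by the construction.

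The main obstacle is the appendix-level field construction: one must run a transfinite (or countable) induction that simultaneously (a) enforces, for \emph{every} étale algebra $L/E$ with $E$ a finite extension appearing in the tower, the biconditional on isotropy of $q_1,q_2$ — this requires the classical fact that $q$ is isotropic over the function field of the quadric of $q'$ controlled by the theory of Pfister neighbours and the Cassels--Pfister / Arason--Pfister machinery — while (b) certifying that the ``bad'' splittings (those that would merge $C_1$ with $C_2$, split one of them, or collapse $\mathrm{cd}$) never occur, using that adjoining the function field of an anisotropic quadric does not split a Pfister form that is not a neighbour of it, and does not decrease anisotropy of unrelated Pfister forms. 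Packaging (a) and (b) compatibly — i.e. showing the ``good'' moves and the ``forbidden'' moves are disjoint at every stage — is where the real work lies; once $F$, $C_1$, $C_2$ are in hand, conditions (1) and (2) are immediate from the construction, and (3) follows by feeding the Pfister-form biconditional into the torus/subalgebra dictionary and invoking Proposition \ref{prop_orient} as above. I would also record the easy direction separately: any oriented type realized in $\Aut(C_i)$ is detected by an étale subalgebra, hence by a splitting field of $n_{C_i}$ of bounded degree, so only countably many biconditionals need to be forced, which is what makes the Merkurjev iteration terminate with $\mathrm{cd}(F)=3$.
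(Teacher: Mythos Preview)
Your overall architecture---Merkurjev-style tower, octonions via their $3$-Pfister norm forms, verify (3) through Proposition~\ref{prop_orient} and an embedding criterion---matches the paper. But you miss the two simplifications that make the paper's argument short, and one of your technical claims is false.

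First, the Weyl group of $G_2$ is $W\cong S_2\times S_3$, so $H^1(F,W)$ classifies pairs $(F',L)$ with $F'$ quadratic \'etale and $L$ cubic \'etale; this is not ``degrees dividing the torus rank.'' The decisive observation, which you do not make, is that once $F$ is $2$-special there are no cubic field extensions of $F$ at all, so every cubic \'etale $L$ is $F\times E$ with $E$ quadratic \'etale. For such $L$ the embedding criterion of \cite[lemma~4.2.1, prop.~4.4.1]{BGL} says a maximal torus of type $[(F',L)]$ embeds in $\Aut(C)$ iff $C$ is split by $F'$ and by $F''$, where $[F']+[F'']=[E]$ in $H^1(F,\ZZ/2\ZZ)$. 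The whole question thus collapses to: which \emph{quadratic} extensions split $C_i$?

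Second, the paper does not enforce your delicate biconditional ``$q_1$ splits over $L\Leftrightarrow q_2$ splits over $L$.'' It does something far stronger and far easier to arrange: it builds $F$ so that $\varphi_1$ and $\varphi_2$ are both split by \emph{every} quadratic extension $F(\sqrt{\delta})$, $\delta\notin F^{\times 2}$. This is precisely Proposition~\ref{prop_skip} with $n=3$: at each stage one adjoins the function fields of the quadrics $\{\varphi_i'\perp\langle\delta\rangle=0\}$, which forces $-\delta$ to be represented by the pure subform and hence (Proposition~\ref{prop_pure}) forces $\varphi_i$ to split over $F(\sqrt{\delta})$; since these quadrics have nontrivial discriminant $\delta$ they are not similar to Pfister forms, so adjoining their function fields preserves anisotropy of $\varphi_1,\varphi_2,\psi$. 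With this in hand, condition~(3) is a one-line application of the criterion above.

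Finally, your assertion that ``iterating function fields of Pfister quadrics preserves the property of being $2$-special'' is wrong: a function field over a $2$-special field is essentially never $2$-special. The paper explicitly passes to a maximal odd separable extension at each stage (the step $k_2\to k_3$ in the proof of Proposition~\ref{prop_skip}), and anisotropy survives by Springer's odd-degree theorem. Likewise $\mathrm{cd}(F)=3$ is not a byproduct of ``countably many biconditionals'': it is obtained by also adjoining function fields of all $(n{+}1)$-Pfister quadrics to kill $I^4(F)$, while $\varphi_{1,F}$ stays anisotropic so $I^3(F)\neq 0$, and then invoking the Milnor conjecture isomorphisms $I^r/I^{r+1}\cong H^r(F,\ZZ/2\ZZ)$.
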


\begin{proof} 
Proposition \ref{prop_skip} of the appendix applied to $n=3$
provides a field $F$ of odd characteristic which is $2$--special (that is its absolute Galois group is a pro-$2$--group)
and of cohomological dimension $3$ together with two non-isometric anisotropic $3$-Pfister forms $\varphi_1$, $\varphi_2$ over $F$
 such that for each $\delta \in F^\times \setminus F^{\times 2}$,
$\varphi_1$ and $\varphi_2$ are split by $F(\sqrt{\delta})$.

We denote by $C_i$ the unique octonion $F$--algebra whose norm form is $\varphi_i$ for
$i=1,2$. Then $C_1$ and $C_2$ are non--isomorphic and both non-split.

It remains to establish property (3). Again (3) is equivalent to that $H_1$ and $H_2$
have the same oriented maximal $k$--tori. Let $H_0$ be the split $F$--group of type $G_2$ and let $T_0$ be a
maximal $F$-split torus of $G_0$. We have $W_0=N_{H_0}(T_0)/T_0=   S_2 \times S_3$ so that
$H^1(F, W_0)$ classifies isomorphism classes of couples $(F',L)$ where $F'$ (resp. $L$)
is an \'etale quadratic (resp. cubic) $F$--algebra.
We are given a class $\gamma \in H^1(F, S_2 \times S_3)$,
namely a class $[(F', L)]$. Since $F$ is $2$--special, we have that $L=F \times E$ where $E$ is a quadratic  \'etale $F$--algebra.
We assume that $H_1=\Aut(C_1)$ admits an $F$--embedding $\iota_1: T \to H_1$ as maximal torus of type
$\gamma$. By \cite[lemma 4.2.1]{BGL}, we have that $T= \bigl( R^1_{F''/F}(\GG_m) \times_F R^1_{F'/F}(\GG_m) \bigr)/ \mu_2$
where $F''$ is the   quadratic  \'etale $F$--algebra  defined by  $[F']+[F'']=[E] \in H^1(F, \ZZ/2\ZZ)$.
By our embedding criterion ({\it ibid}, prop. 4.4.1), we have that $C_1$ is split by $F''$ and $F'$,
or equivalently  $\varphi_1$ is split by $F''$ and $F'$.
Hence $\varphi_2$ and $C_2$ are split by $F''$ and $F'$; the same criterion yields
that there exists a $F$--embedding $\iota_2 : T \to H_2=\Aut(C_2)$ of type $\gamma$.
\end{proof}

\bigskip

A more elaborated example is the following one.

\begin{stheorem} \label{main}
Let $k$ be a field containing a primitive $12$th--root of unity.
We assume that  $k$ is a $2$-special field of cohomological dimension $2$ and
that there exist two non-isomorphic quaternion division algebras $Q_1$, $Q_2$
such that $Q_1$  (resp. $Q_2$) contains all quadratic field extensions of $k$.
We put $K=k((t))$ and consider the octonion $K$--algebras $C_i=C(Q_{i,K},t)$ for
$i=1,2$ defined  by the Cayley-Dickson doubling process, that is \cite[\S 2.1]{SV}
$$
C_i= Q_{i,K} \oplus Q_{i,K} u, \quad u^2=t.
$$

\noindent (1) The octonion $K$--algebras $C_1$ and $C_2$ are non-isomorphic and both non-split.
Furthermore for each $\delta \in K^\times \setminus K^{\times 2}$,  the octonion $K$-algebras
$C_1$ and $C_2$ are split by $K(\sqrt{\delta})$.

\smallskip

\noindent (2) The $K$--groups $H_1=\Aut(C_1)$ and $H_2=\Aut(C_2)$
have coherently equivalent systems of maximal $K$-tori.
\end{stheorem}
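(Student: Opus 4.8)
The plan is to reduce the statement to the same Pfister-form bookkeeping used in Proposition~\ref{prop_cd3}, now over the field $K=k((t))$. First I would record the standard facts about $K$: since $k$ is $2$-special of cohomological dimension $2$, the field $K=k((t))$ is $2$-special of cohomological dimension $3$, and its square classes are described by $K^\times/K^{\times 2} \cong k^\times/k^{\times 2} \times \langle t\rangle$. The norm form of $C_i = Q_{i,K}\oplus Q_{i,K}u$ with $u^2=t$ is the $3$-Pfister form $\varphi_i = n_{Q_{i,K}} \otimes \langle 1, -t\rangle = \langle\langle a_i, b_i, t\rangle\rangle$ where $n_{Q_i}=\langle\langle a_i,b_i\rangle\rangle$; this is the content of \cite[\S2.1]{SV}. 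So part (1) amounts to: $\varphi_1, \varphi_2$ are anisotropic over $K$ (hence $C_1,C_2$ non-split), $\varphi_1 \not\cong \varphi_2$ (hence $C_1\not\cong C_2$), and $\varphi_1, \varphi_2$ become hyperbolic over $K(\sqrt\delta)$ for every non-square $\delta$.

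For part (1) the key step is the residue/Springer-type analysis of $3$-Pfister forms over $k((t))$. Every square class $\delta \in K^\times$ is, up to squares, either a unit $c\in k^\times$ or $tc$ with $c\in k^\times$. If $\delta = c$ is a unit: over $k$, by hypothesis $Q_i$ contains every quadratic extension, so $n_{Q_i}$ is split by $k(\sqrt c)$ when $c$ is a non-square, hence $\varphi_i = n_{Q_{i,K}}\otimes\langle\langle t\rangle\rangle$ is split by $K(\sqrt c)$; if $c$ is a square in $k$ then $\delta$ was a square in $K$ and there is nothing to prove. If $\delta = tc$: then $K(\sqrt\delta) = k((\sqrt{tc})) = k'((s))$ with $s^2 \sim tc$, a field of the same type, and over it $t \sim c^{-1}s^2 \sim c$ becomes a unit, so $\varphi_i$ over $K(\sqrt\delta)$ is $n_{Q_{i}}\otimes\langle\langle c\rangle\rangle$ base-changed, which is split because $c$ (being a unit, possibly a non-square in $k$) is split by the quaternion norm form as above — here one uses the hypothesis once more. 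Anisotropy of $\varphi_i$ over $K$ and $\varphi_1\not\cong\varphi_2$ follow because $Q_i$ is division and $Q_1\not\cong Q_2$ over $k$, via the first-residue map $\varphi_i \mapsto n_{Q_i}$ from $3$-Pfister forms over $k((t))$ to $2$-Pfister forms over $k$.

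Part (2) is then literally the argument in the proof of Proposition~\ref{prop_cd3}, transported verbatim: by Proposition~\ref{prop_orient} it suffices to show $H_1=\Aut(C_1)$ and $H_2=\Aut(C_2)$ have the same oriented maximal $K$-tori relative to the canonical orientation (legitimate since $\Autext$ of a $G_2$-group is trivial, Remark~\ref{remark_abusive}). Take the split $G_2$-group $H_0$ over $K$ with $W_0 = S_2\times S_3$, so a class $\gamma\in H^1(K,W_0)$ is a pair $(F',L)$ with $F'$ quadratic \'etale and $L$ cubic \'etale; since $K$ is $2$-special, $L = K\times E$ with $E$ quadratic \'etale. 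Given an embedding $\iota_1:T\to H_1$ of type $\gamma$, \cite[lemma 4.2.1]{BGL} identifies $T$ as $\bigl(R^1_{F''/K}(\GG_m)\times_K R^1_{F'/K}(\GG_m)\bigr)/\mu_2$ with $[F']+[F'']=[E]$, and the embedding criterion \cite[prop. 4.4.1]{BGL} says $\iota_1$ exists iff $C_1$ (equivalently $\varphi_1$) is split by both $F'$ and $F''$. But $F', F''$ are each of the form $K(\sqrt\delta)$ (or split), so by part (1), $\varphi_1$ split by them forces $\varphi_2$ split by them, hence an embedding $\iota_2:T\to H_2$ of type $\gamma$ exists, and symmetrically. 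The main obstacle is part (1): making the residue computation for $3$-Pfister forms over $k((t))$ clean in the case $\delta\sim tc$, where one must track how the base-field changes and invoke the "$Q_i$ contains all quadratic extensions" hypothesis at the level of the residue quaternion algebra over the new residue field $k(\sqrt c)$ or $k$.
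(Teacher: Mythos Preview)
Your treatment of part~(1) is essentially correct and matches the paper's approach: Springer's theorem for anisotropy and non-isometry, and the unramified/ramified case split for the splitting behaviour. One small point you glossed over: in the ramified case $\delta\sim tc$, after writing $Q_i=(c,d)$ you end up with $\langle\langle c,c,d\rangle\rangle$, and to conclude this is hyperbolic you need $\langle\langle c,c\rangle\rangle$ hyperbolic, i.e.\ $(c,c)$ split. This is where the primitive $12$th root of unity enters: $-1\in k^{\times 2}$, so $c=N_{k(\sqrt c)/k}(i\sqrt c)$ is a norm. The paper makes this explicit.

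Part~(2), however, has a genuine gap. Your opening claim that $K=k((t))$ is $2$-special is false: $k$ is $2$-special, but $K$ is not, since for instance $K(\sqrt[3]{t})/K$ is a cubic field extension (the characteristic is prime to $3$ because $k$ contains a primitive $3$rd root of unity). Consequently the cubic \'etale algebra $L$ appearing in $\gamma=[(K',L)]\in H^1(K,S_2\times S_3)$ need \emph{not} decompose as $K\times E$, and your reduction to \cite[prop.~4.4.1]{BGL} only covers that split case. The paper treats the split case exactly as you do (citing part~(1) and the argument of Proposition~\ref{prop_cd3}), but then must separately dispose of the case where $L$ is a cubic field. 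There it argues: since $k$ is $2$-special, any cubic extension of $K$ is totally ramified, hence $L=K(\sqrt[3]{t})$; one then applies the embedding criterion \cite[prop.~5.2.6]{BGL} (coming from Haile--Knus--Rost--Tignol), which expresses $C_1$ via a hermitian form $h=\langle -b,-c,bc\rangle$ over $K'$ together with an element $\lambda\in L^\times$ satisfying a trace-form identity. A valuation argument forces $\lambda\in L^{\times 2}$, the trace form $t_{L/K}(\langle 1\rangle)$ is computed explicitly, and one deduces that the norm form $N_1$ would be hyperbolic, contradicting part~(1). Thus the cubic-field case is shown not to occur, but this requires real work that your proposal omits entirely.
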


\begin{sremark}{\rm Note that the input of Theorem \ref{main} can be  provided by Proposition \ref{prop_skip}
of the appendix for $n=2$.
}
\end{sremark}

\begin{proof}  We remind first that $K$ is of cohomological dimension $3$ \cite[II, \S 4.3, prop. 12]{Se}.
Also the norm form of $C_i$ is $N_i:=\langle 1,-t \rangle \otimes n_{Q_i,K}$ for $i=1,2$.
Since the quaternionic norm $n_{Q_1}$, $n_{Q_2}$ are anisotropic and
non--isometric, Springer's criterion shows that the $K$--forms $N_1$ and $N_2$ are anisotropic and
non--isometric \cite[VI.1.9]{La}.

Let $K'=K(\sqrt{\delta})$ be a  quadratic field extension of $K$.
If $K'$ is unramified, we may assume that $\delta \in k$, so that $K'=k(\sqrt{\delta})((t))= k'((t))$.
Since $k'$ occurs as subfield of $Q_1$, it follows that $K'$ occurs as composition subalgebra of
$C_1$, so splits $C_1$.
If $K'$ is ramified, we may assume that $\delta = t \delta_0$ with $\delta_0 \in k$.
We have $N_{1,K'}= \langle 1,-(\sqrt{\delta})^2 \delta_0 \rangle \otimes n_{Q_i,K}
= \Bigl( \langle 1, - \delta_0 \rangle \otimes n_{Q_1} \Bigr)_{K'}$.
If $\delta_0 \in k^{\times 2}$, $N_{1,K'}$ splits.
If  $\delta_0 \not \in k^{\times 2}$, we can write $Q_1=(\delta_0, \delta_1)$
for some $\delta_1 \in k^\times$.
It follows that $N_{1,K'}
=  \langle \langle  \delta_0, \delta_0, \delta_1 \rangle \rangle_{K'}$
 splits since $-1$ is a square in $k$.
We conclude from this case by case discussion that $N_{1}$ and $C_1$ are split
over $K'$ (and similarly for $N_2$ and $C_2$).

\smallskip

It remains to check property (2).
We are given a class $\gamma \in H^1(K, S_2 \times S_3)$,
namely a class $[(K', L)]$ where $K'$ (resp. $L$) is a quadratic (resp. cubic) \'etale $K$--algebra
such that $H_1=\Aut(C_1)$ admits a maximal torus embedding $\iota_1:T \to \Aut(C_1)$
of type $\gamma$. Since $C_1$ is not split,  $K'$ is a field \cite[prop. 4.3.1.(1)]{BGL} and we
write it as $K'=K(\sqrt{d})$.
If $L=K \times E$ with $E$ an \'etale quadratic algebra,
(1) and  the same argument as in the proof of Proposition \ref{prop_cd3} shows that
 there is a $K$--embedding $\iota_2: T \to H_2$ of type $\gamma$.
We can focus then on the case when $L$ is a cubic field extension of
$K$. Since $k$ is a $2$--special field, $L$ is ramified so that
$L= K( \sqrt[3]{a_0 t})$ for some $a_0 \in k^\times$.
But $\sqrt[3]{a_0} \in K$ so $L= K( \sqrt[3]{t})$ and is a Galois cubic extension since
$K$ contains a primitive $3$--root of unity.
We apply now our embedding criterion \cite[prop. 5.2.6]{BGL} based on
the work of Haile-Knus-Rost-Tignol \cite{HKRT}.
There exists a 3-dimensional $K'/K$-hermitian form $h=\langle -b,-c, bc \rangle$ of trivial hermitian discriminant
such that $C_1 \cong  C(K',{K'}^3,h)$ arises by  Jacobson's construction
and an element $\lambda \in L^\times$ such that $N_{L/K}(\lambda) \in K^{\times 2}$ and such that
the quadratic $K$--form  $\langle \langle d \rangle  \rangle \otimes t_{L/K}\bigl(\langle \lambda \rangle \bigr)$ is
isometric to $\langle \langle d \rangle \rangle \otimes \langle -b,-c, bc \rangle$.
The notation  $t_{L/K}\bigl(\langle \lambda \rangle \bigr)$ means
the form $L \cong K^3 \to K$, $x \to \mathrm{Tr}_{L/K}(\lambda x^2)$.

\begin{claim}
 $\lambda$ is a square in $L$.
\end{claim}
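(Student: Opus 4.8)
The plan is to prove the claim by directly computing the square-class groups of $L$ and $K$ and the norm map between them; the Haile--Knus--Rost--Tignol isometry will not actually be needed here, only the relation $N_{L/K}(\lambda) \in K^{\times 2}$. Set $\pi = \sqrt[3]{t}$, so that $L = K(\pi) = k((\pi))$ while $K = k((\pi^3))$; both are complete discretely valued fields with residue field $k$, and $L/K$ is totally ramified of degree $3$, with $\pi$ a uniformizer of $L$ and $t = \pi^3$ a uniformizer of $K$. Since $k$ contains a primitive $12$-th root of unity we have $\mathrm{char}(k) \neq 2$, so by Hensel's lemma every principal unit $1 + \pi\,k[[\pi]]$ is a square in $L$, and likewise $1 + t\,k[[t]] \subseteq K^{\times 2}$. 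Reduction modulo the uniformizer therefore yields
$$ L^\times/L^{\times 2} \;\cong\; (k^\times/k^{\times 2}) \oplus \langle [\pi] \rangle \quad\text{and}\quad K^\times/K^{\times 2} \;\cong\; (k^\times/k^{\times 2}) \oplus \langle [t] \rangle, $$
the extra summand in each case being a copy of $\ZZ/2\ZZ$ generated by the class of a uniformizer and detected by the parity of the valuation.

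Next I would record the effect of the norm on these classes. The minimal polynomial of $\pi$ over $K$ is $X^3 - t$, so $N_{L/K}(\pi) = t$; and for $a \in k^\times \subseteq K^\times$ one has $N_{L/K}(a) = a^3 \equiv a \pmod{K^{\times 2}}$. Hence the induced homomorphism $N_{L/K} \colon L^\times/L^{\times 2} \to K^\times/K^{\times 2}$ restricts to the identity on the summand $k^\times/k^{\times 2}$ and carries the generator $[\pi]$ to the generator $[t]$, so by the displayed description it is an isomorphism, in particular injective. Concretely: writing $\lambda \equiv \pi^{m} a \pmod{L^{\times 2}}$ with $m \in \{0,1\}$ and $a \in k^\times$, we obtain $N_{L/K}(\lambda) \equiv t^{m} a \pmod{K^{\times 2}}$, and this class is trivial only if $m = 0$ (even valuation) and $a \in k^{\times 2}$; combined with $1 + \pi\,k[[\pi]] \subseteq L^{\times 2}$ this forces $\lambda \in L^{\times 2}$. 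As $N_{L/K}(\lambda) \in K^{\times 2}$ by hypothesis, the claim follows.

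There is no real obstacle in this argument: the only points requiring care are the $2$-divisibility of the principal units (which is exactly where $\mathrm{char}(k) \neq 2$ enters, guaranteed by the presence of a $12$-th root of unity) and the bookkeeping of the valuation-plus-residue decomposition of $L^\times/L^{\times 2}$ and $K^\times/K^{\times 2}$ together with the value $N_{L/K}(\pi) = t$. Note that neither the isometry relating $t_{L/K}(\langle\lambda\rangle)$ to $\langle -b,-c,bc\rangle$ nor the $2$-speciality and cohomological dimension of $k$ are used for this particular claim; they enter only in the surrounding argument, where knowing $\lambda \in L^{\times 2}$ lets one replace $t_{L/K}(\langle\lambda\rangle)$ by $t_{L/K}(\langle 1\rangle)$ in that isometry.
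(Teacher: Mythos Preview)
Your proof is correct and follows essentially the same approach as the paper's own proof. Both arguments use only the condition $N_{L/K}(\lambda)\in K^{\times 2}$: one writes $\lambda$ up to squares as $\lambda_0\,\pi^m$ with $\lambda_0\in k^\times$ and $m\in\{0,1\}$ (the paper writes this as $\lambda_0\,t^{r/3}$), computes $N_{L/K}(\lambda)\equiv \lambda_0\,t^{m}\pmod{K^{\times 2}}$, and reads off $m=0$ from the valuation and $\lambda_0\in k^{\times 2}$ from the injectivity of $k^\times/k^{\times 2}\to K^\times/K^{\times 2}$. Your framing of this as the norm inducing an isomorphism $L^\times/L^{\times 2}\to K^\times/K^{\times 2}$ is a clean way to package the same computation.
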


Up to a square of $L^\times$, we can write  $\lambda= \lambda_0 \,  t^{\frac{r}{3}}$ with $\lambda_0 \in k^\times$ and $r=0$ or $1$.
We have $N_{L/K}(\lambda) =  \lambda_0^3 \,  t^{\frac{3r}{3}} \in K^{\times 2}$.
By taking the valuation, $r$ is even so is zero. Since the map $k^\times/ k^{ \times 2} \to K^\times/ K^{ \times 2}$ is injective,
it follows that  $\lambda_0 \in k^{ \times 2}$, hence  $\lambda$ is a square in $L$.
The Claim is proven so we may assume that $\lambda=1$.
By writing $L=K \oplus K t^{\frac{1}{3}} \oplus K t^{\frac{2}{3}}$, we get that
the matrix of $t_{L/K}\bigl(\langle 1 \rangle \bigr)$ is
$$
\left[\begin{array}{ccc}
3&0& 0 \\
0&0&3t \\
0&3t&0 \\
\end{array}
\right].
$$
Since $3$ is a  square in $k$,
we have that $t_{L/K}\bigl(\langle 1 \rangle \bigr) \cong  \langle 1,1,-1\rangle$
so that
$$
\langle \langle d \rangle \rangle \otimes
 \langle 1,1,-1\rangle \cong
\langle \langle d \rangle \rangle \otimes \langle -b,-c, bc \rangle.
$$
The Pfister form $\langle \langle d \rangle \rangle \otimes \langle 1, -b,-c, bc \rangle$ is isotropic since
it contains as subform the isotropic form $\langle \langle d \rangle \rangle \otimes \langle -b,-c, bc \rangle$.
Hence  $\langle \langle d \rangle \rangle \otimes \langle 1, -b,-c, bc \rangle$ is hyperbolic.
On the other hand, we have the orthogonal decomposition
$$
C_1 \cong C(K',K^3,h)= K' \oplus (K')^3
$$
and the quadratic form associated to the hermitian form $h$
is the restriction of $N_{1}$ to  $(K')^3$.
It follows that
$$
N_{1} \cong \langle \langle  d \rangle \rangle  \otimes \langle 1,- b, -c, bc \rangle
$$
hence is $N_1$ hyperbolic. It  is then a contradiction and we conclude that  the cubic field
case extension does not occur.
\end{proof}

\begin{sremarks}{\rm
(a) The $k((t))$--groups $H_1$ and $H_2$ are defined over $k(t)$.
One natural question is whether the relevant $k(t)$--groups have
coherently equivalent systems of maximal $k(t)$-tori.

\smallskip

\noindent (b) By using Meyer's refinement of Garibaldi-Saltman's construction \cite{Mr},
one can construct a field $k$ which is $2$--special and of cohomological dimension $2$ and which has
 infinitely many quaternion division algebras $(Q_i)_{i \in I}$ (pairwise non-isomorphic)
such that  $Q_i \otimes_k k(\sqrt{\delta})$ is split for each $i \in I$ and
each $\delta \in k^\times \setminus k^{\times 2}$. Theorem \ref{main} provides then infinitely
many (pairwise non-isomorphic)  $k((t))$--groups of type $G_2$ having pairwise coherently equivalent systems of maximal $k((t))$-tori.
}
\end{sremarks}

\medskip

\section{Appendix: Pfister forms}

Let $k$ be a field of odd characteristic. We start with a variation  on the pure
subform theorem on Pfister forms. Given $a_1,...,a_n \in k^\times$,
we denote by  $\langle \langle a_1, \dots, a_n \rangle \rangle= \langle 1, - a_1 \rangle \otimes \dots \otimes \langle 1, - a_n \rangle$
the $n$--fold Pfister form; we warn the reader that the notation in Lam's book is with $\langle 1, a_1 \rangle \otimes \dots \otimes \langle 1, a_n \rangle$.

\begin{sproposition} \label{prop_pure}
Let $n \geq 2$ be an integer. Let $\varphi$ be a $n$-Pfister form and denote by
$\varphi'$ its pure subform.
Let $\delta \in k^\times \setminus k^{\times 2}$.
Then the following are equivalent:

\smallskip

(i) $\varphi_{k(\sqrt{\delta})}$ is hyperbolic;

\smallskip

(ii) The form $\varphi' \perp \langle \delta \rangle$ is isotropic.

\smallskip

(iii) There exists $b_2,\dots, b_n \in k^\times$ such that $\varphi \cong \langle \langle \delta, b_2,\dots , b_n\rangle \rangle$.

\end{sproposition}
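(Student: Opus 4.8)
The plan is to prove the cycle of implications (iii) $\Rightarrow$ (i) $\Rightarrow$ (ii) $\Rightarrow$ (iii), since (iii) is the most explicit statement and gives the other two cheaply, while (ii) $\Rightarrow$ (iii) is where the real work lies --- this is essentially a ``pure subform'' representation theorem.

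First, (iii) $\Rightarrow$ (i): if $\varphi \cong \langle\langle \delta, b_2,\dots,b_n\rangle\rangle$, then over $k(\sqrt{\delta})$ the slot $\langle 1,-\delta\rangle$ becomes hyperbolic, so the whole Pfister form is hyperbolic. Next, (i) $\Rightarrow$ (ii): suppose $\varphi_{k(\sqrt{\delta})}$ is hyperbolic but $\varphi' \perp \langle\delta\rangle$ is anisotropic. Since $\varphi = \langle 1 \rangle \perp \varphi'$ is a Pfister form, it is either anisotropic or hyperbolic; if it were hyperbolic then $\varphi'$ represents $-1$ and a fortiori $\varphi' \perp \langle\delta\rangle$ is isotropic, contradiction. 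So $\varphi$ is anisotropic. Now $\varphi$ becomes hyperbolic over $k(\sqrt\delta)$, so by the standard theory of quadratic forms over quadratic extensions (e.g. \cite[VII.3.1]{La}, or directly: the kernel of restriction $W(k) \to W(k(\sqrt\delta))$ is the ideal $\langle\langle\delta\rangle\rangle \cdot W(k)$), we can write $\varphi = \langle\langle\delta\rangle\rangle \otimes \rho$ in the Witt ring for some form $\rho$; comparing dimensions and using that $\varphi$ is anisotropic forces $\varphi \cong \langle\langle\delta\rangle\rangle \otimes \rho$ with $\dim\rho = 2^{n-1}$, and in particular $-\delta$ is represented by $\varphi$ (take the entry coming from $\langle -\delta\rangle \otimes \langle 1 \rangle \subseteq \langle\langle\delta\rangle\rangle \otimes \rho$). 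Since $\varphi = \langle 1\rangle \perp \varphi'$ and $-\delta \neq$ (a unit times $1$) up to squares is not the issue --- rather, a Pfister form represents $-\delta$ iff its pure part $\varphi'$ represents $-\delta$ (because $\varphi$ represents $1$, and for Pfister forms $D(\varphi) = G(\varphi)$; if $\varphi$ represents $-\delta$ and also $1$, then either $\varphi'$ represents $-\delta$ directly, or $-\delta$ is a square and then $\varphi'$, having dimension $\geq 1$, still represents $-\delta$ as long as $n\geq 2$ because $\varphi'$ represents $-b_2$ for any slot). Hence $\varphi'$ represents $-\delta$, i.e. $\varphi' \perp \langle\delta\rangle$ is isotropic --- contradiction. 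So (ii) holds.

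The main step is (ii) $\Rightarrow$ (iii). Assume $\varphi' \perp \langle\delta\rangle$ is isotropic. By the pure subform theorem for Pfister forms \cite[X.1.5 or the Cassels--Pfister type results in VII, IX]{La}: the pure subform $\varphi'$ represents exactly the elements $a$ such that $\varphi \cong \langle\langle a \rangle\rangle \otimes \psi$ for some Pfister form --- more precisely, $\varphi'$ represents $-a$ if and only if $\varphi$ contains $\langle 1, a\rangle$ as a subform, if and only if $\varphi \cong \langle\langle -a\rangle\rangle \otimes \tau_{n-1}$ for an $(n-1)$-Pfister form $\tau_{n-1}$. Here isotropy of $\varphi' \perp \langle\delta\rangle$ says that either $\varphi'$ represents $-\delta$ (the generic case), or $\varphi'$ is itself isotropic. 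In the first case, apply the pure subform theorem with $a = \delta$: since $\varphi'$ represents $-(-\delta) = $ hmm, let me be careful with signs --- with our convention $\langle\langle a\rangle\rangle = \langle 1, -a\rangle$, so $\varphi'$ representing $\delta$ means $\varphi$ contains $\langle 1, -\delta\rangle = \langle\langle\delta\rangle\rangle$ and hence splits off a $\langle\langle\delta\rangle\rangle$ factor: $\varphi \cong \langle\langle\delta, b_2,\dots,b_n\rangle\rangle$ for suitable $b_i$; but $\varphi' \perp \langle\delta\rangle$ isotropic means $\varphi'$ represents $-\delta$, so one should rather note that $\varphi' $ represents $-\delta$ forces (using that Pfister forms satisfy $\varphi \cong -\delta \cdot \varphi$ when $\varphi$ represents $-\delta$... no). I would instead phrase it cleanly: $\varphi$ represents $1$ (always) and $\varphi' \perp \langle\delta\rangle$ isotropic gives that $\varphi'$ represents $-\delta$, hence $\varphi$ represents the pair $1$ and $-\delta$, so by the subform theorem \cite[X.4.5]{La} $\langle 1, \delta\rangle = \langle\langle -\delta\rangle\rangle$ is a subform of $\varphi$, wait --- $\langle 1, -\delta \rangle$ is what we want. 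Representing $1$ and $-\delta$ independently (when they're $\mathbb{Z}$-independent mod squares) gives $\langle 1, -\delta\rangle \subseteq \varphi$ hence $\varphi \cong \langle\langle\delta\rangle\rangle \otimes \tau$, and since $\varphi$ and $\langle\langle\delta\rangle\rangle$ are Pfister, so is $\tau = \langle\langle b_2,\dots,b_n\rangle\rangle$; thus $\varphi \cong \langle\langle\delta, b_2,\dots,b_n\rangle\rangle$. In the degenerate subcase where $\varphi'$ (equivalently $\varphi$) is already isotropic, $\varphi$ is hyperbolic, so $\varphi \cong \langle\langle 1, b_2, \dots, b_n\rangle\rangle$... but we need the $\delta$-slot: a hyperbolic Pfister form is $\cong \langle\langle \delta, b_2, \dots, b_n\rangle\rangle$ for \emph{any} choice since $\langle\langle\delta\rangle\rangle \otimes (\text{hyperbolic})$ is hyperbolic --- take any $b_2,\dots,b_n$ making the product hyperbolic, e.g. $b_2 = \dots = $ arbitrary with one adjustment, or simply note a hyperbolic form of dimension $2^n$ is isometric to $\langle\langle\delta\rangle\rangle \otimes \langle\langle b_2,\dots,b_{n-1}, 1\rangle\rangle$ no --- cleanest: $\langle\langle \delta, \delta, b_3, \dots, b_n\rangle\rangle$ is hyperbolic since $\langle\langle\delta,\delta\rangle\rangle$ is, and this equals $\varphi$. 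So (iii) holds.

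The hard part --- or really the only part requiring care --- will be invoking the pure subform / common slot theorems with the correct signs under the paper's convention $\langle\langle a\rangle\rangle = \langle 1, -a\rangle$ (which is opposite to Lam's), and handling the isotropic edge case; everything else is a bookkeeping exercise with Witt-ring identities and the fundamental fact that a Pfister form is anisotropic or hyperbolic. I would organize the write-up so that the substantive content is the single assertion ``$\varphi'$ represents $-\delta$ $\iff$ $\langle 1,-\delta\rangle$ is a subform of $\varphi$ $\iff$ $\varphi$ splits off a $\langle\langle\delta\rangle\rangle$ factor,'' which is exactly the classical pure subform theorem, and then (i)--(iii) fall out.
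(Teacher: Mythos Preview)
Your cycle of implications and the lemmas you invoke (the kernel description \cite[VII.3.1]{La} for $(i)\Rightarrow(ii)$, the pure subform theorem \cite[X.1.5]{La} for $(ii)\Rightarrow(iii)$) are exactly the paper's approach. However, your execution of $(i)\Rightarrow(ii)$ contains a real gap, and there is a minor slip in the hyperbolic edge case.

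\textbf{The gap in $(i)\Rightarrow(ii)$.} From $\varphi\cong\langle\langle\delta\rangle\rangle\otimes\rho$ you cannot ``take the entry coming from $\langle-\delta\rangle\otimes\langle 1\rangle$'': nothing forces $\rho$ to represent $1$. More seriously, the biconditional you then invoke, ``a Pfister form represents $-\delta$ iff its pure part $\varphi'$ represents $-\delta$,'' is \emph{false}. Over $\mathbb Q$ take $\varphi=\langle\langle 2,3\rangle\rangle=\langle 1,-2,-3,6\rangle$, which is anisotropic. Then $\varphi$ represents $-1$ (via $(1,1,0,0)$), $-1$ is not a square, yet by the pure subform theorem $\varphi'$ represents $-1$ only if $\varphi\cong\langle\langle 1,b\rangle\rangle$ is hyperbolic, which it is not. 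So your intermediate step fails even for $n=2$. The clean argument---and this is what the paper does---bypasses this entirely: from \cite[VII.3.1]{La} write $\varphi\cong\langle a,-\delta a\rangle\perp\psi$; since $\varphi$ is round and represents $a$, one has $\varphi\cong a\varphi\cong\langle 1,-\delta\rangle\perp a\psi$; Witt cancellation against $\varphi=\langle 1\rangle\perp\varphi'$ gives $\varphi'\cong\langle-\delta\rangle\perp a\psi$, so $\varphi'\perp\langle\delta\rangle$ is isotropic. No detour through $D(\varphi)$ versus $D(\varphi')$ is needed.

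\textbf{The minor slip.} The form $\langle\langle\delta,\delta\rangle\rangle=\langle 1,-\delta,-\delta,\delta^2\rangle$ is Witt-equivalent to $2\langle 1,-\delta\rangle$, hence hyperbolic only when $\delta$ is a square---precisely what you excluded. To realise the hyperbolic $2^n$-form with a $\delta$-slot, take $b_2=1$ so that $\langle\langle\delta,1,b_3,\dots,b_n\rangle\rangle$ contains the hyperbolic plane $\langle 1,-1\rangle$. Cleaner still is to do as the paper does: observe at the outset that if $\varphi$ is hyperbolic then all three statements hold trivially, and work throughout with $\varphi$ anisotropic.
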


Note that (ii) is equivalent to the fact that $-\delta$ is represented by $\varphi'$.

\begin{proof}
If $\varphi$ is hyperbolic, all assertions hold so that we can assume than $\varphi$
is anisotropic.

\smallskip

\noindent  $(i) \Longrightarrow (ii)$. Then we can write $\varphi= \langle a, - \delta a \rangle \perp \psi$ \cite[VII.3.1]{La}.
Since $\varphi$ is multiplicative, it follows that $\varphi= a \varphi= \langle 1, - \delta  \rangle \perp a\psi$.
By Witt cancellation, it follows that  $\varphi'=  \langle - \delta  \rangle \perp a\psi$.
Hence  $\varphi' \perp \langle \delta \rangle =  \langle \delta, - \delta  \rangle \perp a\psi$ is isotropic.

\smallskip

\noindent  $(ii) \Longrightarrow (iii)$. It  is  the pure subform theorem \cite[th. X.1.5]{La}.

\smallskip

\noindent  $(iii) \Longrightarrow (i)$. Obvious.
\end{proof}

We do now a variation on Garibaldi-Saltman's construction \cite[Example 2.1]{GaS}.

\begin{sproposition} \label{prop_skip} Let $n \geq 2$ be an integer.
Let $\varphi_1, \varphi_2$ be both  anisotropic $n$--Pfister forms.
We assume that $\psi=(\varphi_1 \perp - \varphi_2)_{an}$ is of dimension $2^n$.
Then there exists a field extension $F/k$ satisfying the following properties:

\smallskip

(i) $\varphi_{1,F}, \varphi_{2,F}, \psi_F$ are  anisotropic;

\smallskip

(ii) For each $\delta \in F^\times\setminus F^{\times 2}$,  $\varphi_{1,F(\sqrt{\delta})}$,
$\varphi_{2,F\sqrt{\delta})}$, $\psi_{F(\sqrt{\delta})}$ are split;

\smallskip

(iii) $F$ is $2$--special, i.e. its absolute Galois group is a pro-$2$--group;

\smallskip

(iv) $\mathrm{cd}(F)=n$.

\end{sproposition}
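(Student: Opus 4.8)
The plan is a transfinite construction ``à la Merkurjev''. Starting from $k$ I will enlarge the field so as to kill every anisotropic $(n+1)$-fold Pfister form over every finite extension, while keeping $\varphi_1$, $\varphi_2$ and $\psi$ anisotropic; the key numerical coincidence is that all three of these forms have dimension at most $2^n$, strictly less than the dimension $2^{n+1}$ of an $(n+1)$-fold Pfister form. At the end I pass to a Sylow pro-$2$ fixed field to obtain (iii), and then read off (ii) and (iv) from what has been arranged. All fields occurring have odd characteristic, so the usual quadratic form machinery (Witt cancellation, Springer's theorem, function fields of quadrics) is available. Two preservation principles are the engine. First, if $L/E$ is a directed union of finite field extensions of odd degree, then any form anisotropic over $E$ stays anisotropic over $L$: this is Springer's theorem, together with the observation that isotropy over a directed union is detected at finite level. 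Second, if $\rho$ is an anisotropic $(n+1)$-fold Pfister form over $E$ and $q$ is anisotropic over $E$ with $\dim q\le 2^n$, then $q_{E(\rho)}$ is anisotropic: this is Hoffmann's separation theorem applied with the power of $2$ taken to be $2^n$, since $\dim q\le 2^n<2^{n+1}=\dim\rho$; moreover $\rho_{E(\rho)}$ is hyperbolic, an isotropic Pfister form being hyperbolic. Both principles apply verbatim to each of $q=\varphi_1,\varphi_2,\psi$.

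Now I construct a tower $k=F^{(0)}\subseteq F^{(1)}\subseteq\cdots$ where $F^{(m+1)}$ is the compositum, inside a fixed large field extension, of all function fields $E(\rho)$ as $E$ ranges over finite extensions of $F^{(m)}$ and $\rho$ over anisotropic $(n+1)$-fold Pfister forms over $E$ (legitimate, since up to isomorphism there is only a set of such pairs). Put $\widetilde F=\bigcup_m F^{(m)}$. By the second principle and a colimit argument, $\varphi_1,\varphi_2,\psi$ remain anisotropic over $\widetilde F$, and $\psi_{\widetilde F}=(\varphi_{1,\widetilde F}\perp-\varphi_{2,\widetilde F})_{an}$ still has dimension $2^n$. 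Furthermore any anisotropic $(n+1)$-fold Pfister form over a finite extension of $\widetilde F$ would already be defined over a finite extension of some $F^{(m)}$, hence would have been made hyperbolic at stage $m+1$; so no such form exists. Finally let $F$ be the fixed field of a Sylow pro-$2$ subgroup of $\mathrm{Gal}(\widetilde F_s/\widetilde F)$. Then $F$ is $2$-special, which is (iii). The extension $F/\widetilde F$ is a directed union of finite extensions of odd degree, so the first principle gives that $\varphi_1,\varphi_2,\psi$ stay anisotropic over $F$, which is (i) (and $\psi_F$ again has dimension $2^n$). Finally, every finite extension $E/F$ is a directed union of finite extensions of $\widetilde F$, so every $(n+1)$-fold Pfister form over $E$ is defined over, hence hyperbolic over, some finite extension of $\widetilde F$, and therefore is hyperbolic over $E$.

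It remains to deduce (ii) and (iv). For (ii): writing $\varphi_i=\langle\langle a_1,\dots,a_n\rangle\rangle$, for $\delta\in F^\times\setminus F^{\times2}$ the form $\langle\langle\delta\rangle\rangle\otimes\varphi_i=\langle\langle\delta,a_1,\dots,a_n\rangle\rangle$ is an $(n+1)$-fold Pfister form over $F$, hence hyperbolic by the previous paragraph, which is equivalent to $\varphi_{i,F(\sqrt\delta)}$ being hyperbolic; and since $[\psi_F]=[\varphi_{1,F}]-[\varphi_{2,F}]$ in $W(F)$, we get $[\langle\langle\delta\rangle\rangle\otimes\psi_F]=[\langle\langle\delta\rangle\rangle\otimes\varphi_1]-[\langle\langle\delta\rangle\rangle\otimes\varphi_2]=0$, so $\psi_{F(\sqrt\delta)}$ is hyperbolic as well. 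For (iv): since $F$ is $2$-special, $\mathrm{cd}(F)=\mathrm{cd}_2(F)$. The anisotropic $n$-fold Pfister form $\varphi_{1,F}$ has dimension $2^n<2^{n+1}$, so it is not in $I^{n+1}(F)$ by the Arason--Pfister Hauptsatz, whence by the norm residue theorem $H^n(F,\ZZ/2)\ne 0$ and $\mathrm{cd}_2(F)\ge n$. Conversely, for every finite extension $E/F$ the group $H^{n+1}(E,\ZZ/2)$ is generated by the classes of $(n+1)$-fold Pfister forms over $E$ (norm residue theorem), and all of these vanish by the second paragraph; hence $H^{n+1}(E,\ZZ/2)=0$ for every finite $E/F$, so $\mathrm{cd}_2(F)\le n$ by Serre's criterion. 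Therefore $\mathrm{cd}(F)=n$.

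The heart of the matter, and the main obstacle, is the tension between the two goals: forcing (ii) and $\mathrm{cd}(F)\le n$ requires killing all $(n+1)$-fold Pfister forms, but this must not collapse $\varphi_1$, $\varphi_2$ or $\psi$. The hypothesis $\dim\psi=2^n$ is exactly what makes Hoffmann's separation theorem applicable (with $m=n$) and so keeps $\psi$ anisotropic along the tower; had $\psi$ been of larger dimension the argument would break. The only other non-elementary input is the norm residue theorem, invoked merely to convert the Pfister-theoretic vanishing into the cohomological bound $\mathrm{cd}(F)\le n$ and the lower bound $\mathrm{cd}(F)\ge n$.
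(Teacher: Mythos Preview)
There is a genuine gap in your preservation argument. Your tower adjoins $E(\rho)$ for \emph{every} finite extension $E$ of $F^{(m)}$; but then the compositum $F^{(m+1)}$ contains each such $E$ as a subfield. Take $E=F^{(0)}(\sqrt{a_1})$ where $\varphi_1=\langle\langle a_1,\dots,a_n\rangle\rangle$. As soon as $E$ carries some anisotropic $(n+1)$--fold Pfister form (which it typically does, e.g.\ already over $k=\mathbb{R}(t_1,\dots,t_n)$), the field $E(\rho)$ enters the compositum, so $\sqrt{a_1}\in F^{(1)}$ and $\varphi_{1,F^{(1)}}$ is hyperbolic. Hoffmann's theorem only guarantees that $q_{E(\rho)}$ stays anisotropic when $q$ is already anisotropic \emph{over $E$}; it says nothing once you pass through an algebraic extension that kills $q$. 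So the ``second principle and a colimit argument'' fails exactly at the step where you range over genuine algebraic extensions $E/F^{(m)}$.

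The repair is to adjoin function fields only of $(n+1)$--Pfister forms defined over $F^{(m)}$ itself. You then lose the claim that every $(n+1)$--Pfister form over every finite extension of $\widetilde F$ is hyperbolic, and with it your route to $\mathrm{cd}_2(F)\le n$ via the general Serre criterion; passing to the odd closure only once at the end no longer suffices. The paper handles this by \emph{interleaving}: at each stage one adjoins the function fields of $(n+1)$--Pfister forms over the current field, then passes to a maximal odd extension, and iterates. The limit $F$ is $2$--special and every $(n+1)$--Pfister form over $F$ itself is hyperbolic, whence $H^{n+1}(F,\ZZ/2)=0$; since $\Gal(F_s/F)$ is pro--$2$, Serre's criterion for pro--$p$--groups (vanishing of $H^{n+1}(F,\ZZ/2)$ alone suffices) gives $\mathrm{cd}(F)\le n$. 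With this modification your derivation of (ii), reading it off from the hyperbolicity of the $(n+1)$--Pfister form $\langle\langle\delta\rangle\rangle\otimes\varphi_i$, goes through and is in fact cleaner than the paper's, which inserts an additional step (passing to function fields of $\varphi_i'\perp\langle\delta\rangle$) that your observation makes unnecessary.
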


According to Arason-Pfister \cite[Kor. 3]{AP} (or \cite[X.4.34]{La}), we know that the form $\psi$
is similar to a $n$--Pfister form.
Garibaldi-Saltman's original construction is the case $n=2$ without the refinements (iii) and (iv). By the dictionnary between quaternion algebras
and $2$--Pfister forms, it permits to construct
non-isomorphic quaternion algebras $Q_1$, $Q_2$ over a field $F$ which are split by $F(\sqrt{\delta})$
for each $\delta \in F^\times \setminus F^{\times 2}$.

\begin{slemma} \label{lem_skip} Under the assumptions of Proposition \ref{prop_skip}, let $\delta \in k^\times \setminus {k^\times}^2$.
Denote by $E^{\delta}_i$ the function field of the projective quadric $\{ \varphi'_i \perp \langle  \delta \rangle =0 \}$ for $i=1,2$
and put $E^{\delta}=E_1^{\delta}.E_2^{\delta}$.
Then $\varphi_{1,E}$, $\varphi_{2,E}$, $\psi_E$ are  anisotropic and are split by $E(\sqrt{\delta})$.
\end{slemma}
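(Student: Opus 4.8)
The plan is to establish the two halves of the statement separately. The splitting assertion --- that $\varphi_{1},\varphi_{2},\psi$ become hyperbolic over $E^{\delta}(\sqrt\delta)$ --- is a quick consequence of Proposition \ref{prop_pure}, whereas the anisotropy over $E^{\delta}$ is the substantive point and will be deduced from the Cassels--Pfister subform theorem together with a determinant obstruction. Write $E:=E^{\delta}$, $E_{i}:=E_{i}^{\delta}$ and $q_{i}:=\varphi_{i}'\perp\langle\delta\rangle$. Since an $n$--fold Pfister form with $n\ge 2$ has trivial determinant, so does its pure subform $\varphi_{i}'$; hence $q_{i}$ is a nondegenerate quadratic form of dimension $2^{n}\ge 4$ and of determinant $\delta$. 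As $\mathrm{char}\,k\ne 2$, each quadric $\{q_{i}=0\}$ is a smooth, geometrically connected, hence geometrically integral projective hypersurface, so $E_{i}=k(q_{i})$ is a regular extension of $k$. Consequently $\delta$ stays a non-square in each $E_{i}$, the product $E_{1}\otimes_{k}E_{2}$ is a domain, and the compositum $E=E_{1}\cdot E_{2}$ is canonically the function field $E_{1}(q_{2})$ of the quadric of $(q_{2})_{E_{1}}$ over $E_{1}$; applying the same reasoning over $E_{1}$ shows that $\delta$ remains a non-square in $E$ as well. Thus I will build $E$ in two function-field steps, first from $k$ to $E_{1}$ and then from $E_{1}$ to $E=E_{1}(q_{2})$.

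For the splitting: by construction $q_{i}=\varphi_{i}'\perp\langle\delta\rangle$ is isotropic over $E_{i}$, so the implication (ii)$\Rightarrow$(i) of Proposition \ref{prop_pure}, applied over $E_{i}$ (legitimate since $\delta\notin E_{i}^{\times 2}$), gives that $(\varphi_{i})_{E_{i}(\sqrt\delta)}$ is hyperbolic; extending scalars, $(\varphi_{1})_{E(\sqrt\delta)}$ and $(\varphi_{2})_{E(\sqrt\delta)}$ are hyperbolic. Since $\psi$ is Witt-equivalent over $k$ to $\varphi_{1}\perp-\varphi_{2}$, the form $\psi_{E(\sqrt\delta)}$ is Witt-trivial and, being even-dimensional, hyperbolic. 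This disposes of the splitting.

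For the anisotropy, the tool is the following consequence of the subform theorem \cite[Ch.~X]{La}: if $\pi$ is an anisotropic Pfister form over a field $F$ and $q$ is an $F$--form with $\dim q\ge 2$ such that $\pi_{F(q)}$ is isotropic, then $q$ is similar to a subform of $\pi$ (the converse being obvious). By Arason--Pfister \cite[Kor.~3]{AP}, recorded above, $\psi$ is similar to an $n$--fold Pfister form $\pi_{\psi}$, which is anisotropic because $\psi$ is; hence it suffices to show that each of the anisotropic $n$--fold Pfister forms $\varphi_{1},\varphi_{2},\pi_{\psi}$ remains anisotropic first over $E_{1}$ and then over $E=E_{1}(q_{2})$. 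For the first step: if one of these Pfister forms, say $\pi$, became isotropic over $E_{1}=k(q_{1})$, then --- $\pi$ being anisotropic over $k$ --- the criterion would make $q_{1}$ similar to a subform of $\pi$; as $\dim q_{1}=2^{n}=\dim\pi$, the subform must be all of $\pi$, so $q_{1}\cong c\pi$ for some $c\in k^{\times}$, whence $\delta\equiv\det q_{1}\equiv\det(c\pi)\equiv 1$ in $k^{\times}/k^{\times 2}$, contradicting $\delta\notin k^{\times 2}$. Therefore $\varphi_{1},\varphi_{2},\psi$ are anisotropic over $E_{1}$. The second step is verbatim the same, now over the base field $E_{1}$ and with the quadric of $(q_{2})_{E_{1}}$: this form still has dimension $2^{n}$ and determinant $\delta$ (in $E_{1}^{\times}/E_{1}^{\times 2}$, where $\delta$ is a non-square), so the identical determinant contradiction shows $\varphi_{1},\varphi_{2},\psi$ stay anisotropic over $E=E_{1}(q_{2})$, completing the proof. (If either quadric is already isotropic over its base field, that function-field extension is purely transcendental and there is nothing to prove.)

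The two bookkeeping inputs --- the function-field subform criterion for Pfister forms, and the persistence of $\delta$ as a non-square after each function-field extension --- are standard once the quadrics involved are recognized as geometrically integral, and I anticipate no real difficulty there. The conceptual heart of the argument, exactly as in Garibaldi--Saltman's original construction, is the determinant obstruction: $\det q_{i}\equiv\delta$ is a non-square while every Pfister form has trivial determinant, and this is what forces $\varphi_{1},\varphi_{2},\psi$ to remain anisotropic over $E$ even though their pure subforms are made to represent $-\delta$ there.
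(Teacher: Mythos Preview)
Your argument is correct and follows essentially the same route as the paper's proof. Both hinge on the observation that $q_i=\varphi_i'\perp\langle\delta\rangle$ has discriminant $\delta\not\equiv 1$, hence is not similar to a Pfister form, and then invoke the function-field criterion for anisotropic Pfister forms (the paper cites \cite[Cor.~X.4.10.(3)]{La} directly, whereas you unpack it via the Cassels--Pfister subform theorem plus the dimension-and-determinant contradiction); the splitting over $E(\sqrt\delta)$ is obtained in both cases from Proposition~\ref{prop_pure}. Your version is simply more explicit about the two-step passage to the compositum and the regularity bookkeeping, but there is no substantive difference in strategy.
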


\begin{proof}
The point is that  $\varphi'_i \perp \langle \delta \rangle$ is of discriminant $\delta$
so is not  similar to a Pfister form for $i=1,2$.
By \cite[Cor. X.4.10.(3)]{La}, it follows that $\varphi_1$ (resp. $\varphi_2$, $\psi$) remains anisotropic
over $E_1^{\delta}$ and $E^{\delta}=E_1^{\delta}.E_2^{\delta}$.
Also Proposition \ref{prop_pure}, $(ii)\Longrightarrow (i)$, ensures that
 $\varphi_{1,E}, \varphi_{2,E}$ are split by $E(\sqrt{\delta})$
and so is  $\psi_E$.
\end{proof}

\medskip

We proceed now to the proof of Proposition \ref{prop_skip}.

\begin{proof} We shall   construct a tower of fields  $k_0 =k \subset k_1 \subset k_2 \subset \dots$
\`a la Merkurjev \cite{Me}.
We denote by $k_1$ the composition of the function fields $E^{\delta}$, defined in Lemma \ref{lem_skip}, for $\delta$ running over $k^\times \setminus k^{\times 2}$.
In other words, $k_1$ is the inductive limit of the fields
$E^{\lambda}=E^{\delta_1}.\dots E^{\delta_n}$ where $\lambda=(\delta_1, \dots, \delta_n)$
runs over the finite subsets of $k^\times \setminus k^{\times 2}$.

\begin{sclaim}\label{claim1} The quadratic forms $\varphi_{1,k_1}, \varphi_{2,k_1}, \psi_{k_1}$ are  anisotropic
and are split by $k_1(\sqrt{\delta})$ for
each $\delta \in k^\times \setminus (k^\times)^2$.
\end{sclaim}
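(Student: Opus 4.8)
The plan is to prove the claim by an inductive/limiting argument: anisotropy is checked first, using the fact that anisotropy is a ``finitely generated'' phenomenon, and then the splitting property is read off directly from the construction of $k_1$. First I would observe that $k_1$ is a filtered colimit of the fields $E^{\lambda}$ where $\lambda$ runs over finite subsets of $k^\times\setminus k^{\times 2}$; in particular every element of $k_1$, and more relevantly every finite-dimensional quadratic form that could witness isotropy, is already defined over some $E^{\lambda}$. By repeated application of Lemma \ref{lem_skip} (more precisely, by the cited \cite[Cor. X.4.10.(3)]{La} used finitely many times, once for each $\delta_i$ in $\lambda$, to each of the three forms $\varphi_1$, $\varphi_2$, $\psi$), the forms $\varphi_{1,E^{\lambda}}$, $\varphi_{2,E^{\lambda}}$, $\psi_{E^{\lambda}}$ remain anisotropic over every such finite-level field $E^{\lambda}$. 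Since anisotropy over the colimit is equivalent to anisotropy at every finite stage, we conclude that $\varphi_{1,k_1}$, $\varphi_{2,k_1}$, $\psi_{k_1}$ are anisotropic.

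For the splitting assertion, fix $\delta\in k^\times\setminus k^{\times 2}$. I would first note that since $\varphi_{1,k_1}$ is anisotropic (just proved), in particular $\varphi_1$ is not hyperbolic over $k_1$, so $\delta$ remains a non-square in $k_1$ and $k_1(\sqrt{\delta})$ is a genuine quadratic extension; this makes the statement meaningful. Now by construction $k_1$ contains $E^{\delta}=E_1^{\delta}.E_2^{\delta}$, and Lemma \ref{lem_skip} tells us precisely that $\varphi_{1}$, $\varphi_{2}$, $\psi$ are split by $E^{\delta}(\sqrt{\delta})$. Since $E^{\delta}(\sqrt{\delta})\subseteq k_1(\sqrt{\delta})$, base change from $E^{\delta}(\sqrt{\delta})$ to $k_1(\sqrt{\delta})$ shows that $\varphi_{1,k_1(\sqrt{\delta})}$, $\varphi_{2,k_1(\sqrt{\delta})}$, $\psi_{k_1(\sqrt{\delta})}$ are split (a form that is hyperbolic stays hyperbolic under any field extension). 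This holds for every $\delta\in k^\times\setminus k^{\times 2}$, which is exactly what is claimed.

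The only subtlety — and the step I would be most careful about — is the phrase ``split by $k_1(\sqrt{\delta})$ for each $\delta\in k^\times\setminus k^{\times 2}$'': one must keep the parameter $\delta$ ranging over $k^\times$ (not $k_1^\times$), so that the relevant function fields $E^{\delta}$ were already thrown into the colimit defining $k_1$. If instead one wanted splitting for all non-squares of $k_1$ itself, a single application would not suffice and one would need the full tower $k_0\subset k_1\subset k_2\subset\cdots$; but for Claim \ref{claim1} as stated, the parameter set is $k^\times\setminus k^{\times 2}$, so one pass is enough and the argument above closes. I do not expect any genuine obstacle here beyond bookkeeping the colimit and invoking Lemma \ref{lem_skip} and \cite[Cor. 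X.4.10.(3)]{La} correctly.
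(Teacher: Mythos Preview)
Your proposal is correct and follows the same approach as the paper: anisotropy at each finite stage $E^{\lambda}$ via Lemma \ref{lem_skip}, passage to the filtered colimit, and splitting over $k_1(\sqrt{\delta})$ read off directly from the construction. The only minor difference is in checking that $\delta$ remains a non-square in $k_1$: the paper notes that $k$ is algebraically closed in $k_1$, whereas your contrapositive argument (if $\delta$ were a square then $E^{\delta}(\sqrt{\delta})\subseteq k_1$, forcing $\varphi_{1,k_1}$ hyperbolic) is also valid but is presented slightly out of order, since the splitting over $E^{\delta}(\sqrt{\delta})$ it relies on is only invoked in your next sentence.
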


\noindent By construction, $k$ is algebraically closed in $k_1$, so that
$k^\times / k^{\times 2}$ injects in  $k_1^\times / k_1^{\times 2}$
Lemma \ref{lem_skip} shows that quadratic forms $\varphi_{1,E^\lambda}$, $\varphi_{2,E^\lambda}$, $\psi_{E^\lambda}$
are  anisotropic for each finite tuple $\lambda$ of elements of $k^\times \setminus k^{\times 2}$.
It follows that the quadratic forms $\varphi_{1,E}$, $\varphi_{2,E}$, $\psi_E$ are  anisotropic.
The fact that $\varphi_{1,E}$, $\varphi_{2,E}$, $\psi_E$ are  anisotropic and are split by $E(\sqrt{\delta})$
follows from the construction.
The Claim is proven.

The two other steps of the construction are standard. We denote by $k_2$ the composition of the function fields
of the projective quadrics $\{q=0\}$ where $q$ runs over the $(n+1)$-Pfister $k_1$--forms.
Now we take $k_3$ as a maximal separable algebraic odd extension of $k_2$.

\begin{sclaim}\label{claim2} The quadratic forms $\varphi_{1,k_3}, \varphi_{2,k_3}, \psi_{k_3}$ are  anisotropic.
\end{sclaim}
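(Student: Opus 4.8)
The plan is to prove Claim~\ref{claim2} in two independent steps: first, that $\varphi_1,\varphi_2,\psi$ remain anisotropic over $k_2$, and second, that anisotropy is preserved under the odd algebraic extension $k_3/k_2$.

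For the passage to $k_2$, recall that $\varphi_1$ and $\varphi_2$ are $n$-Pfister forms and that, by Arason--Pfister \cite[Kor.~3]{AP}, $\psi$ is similar to an $n$-Pfister form $\pi_0$; by Claim~\ref{claim1} the forms $\varphi_1,\varphi_2$ and (hence) $\pi_0$ are anisotropic over $k_1$. Since $k_2$ is the filtered union of the iterated function fields $k_1(q_1)(q_2)\cdots(q_m)$ with each $q_j$ an $(n+1)$-Pfister form, and anisotropy is preserved under filtered unions of fields, it suffices to carry out the following single step: if $\tau$ is an anisotropic quadratic form over a field $\ell\supseteq k_1$ that is similar to an $n$-Pfister form, and $q$ is an $(n+1)$-Pfister form over $\ell$, then $\tau_{\ell(q)}$ is anisotropic. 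If $q$ is isotropic this is immediate, since $\ell(q)$ is then purely transcendental over $\ell$. If $q$ is anisotropic and $\tau_{\ell(q)}$ were isotropic, it would be hyperbolic (a form similar to a Pfister form is isotropic if and only if hyperbolic), and the Cassels--Pfister subform theorem \cite{La} would force $q$ to be similar to a subform of $\tau$, which is impossible because $\dim q = 2^{n+1} > 2^n = \dim\tau$; equivalently, this is Hoffmann's separation theorem applied to $\dim\tau\le 2^n<\dim q$. Taking $\tau=\varphi_1,\varphi_2,\pi_0$ in turn shows that $\varphi_{1,k_2},\varphi_{2,k_2},(\pi_0)_{k_2}$, hence $\psi_{k_2}$, are anisotropic.

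For the passage from $k_2$ to $k_3$, note that $k_3/k_2$ is a filtered union of finite subextensions, each of odd degree. We are in odd characteristic, so Springer's theorem on odd-degree extensions \cite[VII.2.3]{La} ensures that a quadratic form anisotropic over $k_2$ stays anisotropic over each such finite subextension, hence over $k_3$. Combining this with the previous step yields that $\varphi_{1,k_3},\varphi_{2,k_3},\psi_{k_3}$ are anisotropic.

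The only genuinely delicate point is the function-field step over $k_2$: one cannot conclude merely by comparing dimensions, but must invoke a true going-up result for forms similar to Pfister forms, namely the subform theorem (or Hoffmann's separation theorem). The remaining ingredients --- the reduction to a single quadric via filtered unions and Springer's theorem for the odd extension --- are entirely routine.
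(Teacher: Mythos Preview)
Your proof is correct and follows essentially the same route as the paper: Springer's odd-degree theorem handles the passage $k_2\to k_3$, while the passage $k_1\to k_2$ uses that forms similar to $n$-Pfister forms stay anisotropic over the function field of an $(n+1)$-Pfister form (the paper cites \cite[Cor.~X.4.13]{La}, which is precisely the subform/dimension argument you spell out). Your write-up is more detailed, but the strategy is identical.
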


\noindent The passage from $k_2$ to $k_3$ works by Springer's odd extension theorem.
For $k_2$, as before, it is enough to justify  that the anisotropy is preserved
on the function field $k_1(q)$ of a projective quadric arising from a $(n+1)$--Pfister $k_1$--form $q$.
Since our forms are similar to $n$-Pfister forms, this works
 granting \cite[Cor. X.4.13]{La}. The Claim is proven.

We put then $F_0=k$, $F_1=k_3$, construct $F_2$ from $F_1$ as $k_3$ from $k=F_0$ and so on.
We put $F= \limind_{n \geq 0} F_n$ and shall check the requested properties.

Claim \ref{claim2} insures that  $\varphi_{1,F}, \varphi_{2,F}, \psi_{F}$ are  anisotropic.
Claim \ref{claim1} guarantees that for each $\delta \in F^\times\setminus F^{\times 2}$,  $\varphi_{1,F(\sqrt{\delta})}$,
$\varphi_{2,F(\sqrt{\delta})}$, $\psi_{F(\sqrt{\delta})}$ are split.
By construction, $F$ has no non-trivial separable odd finite field extension, hence
$F$ is $2$--special.
For determining the cohomological dimension of $F$, we use the quadratic part of the Milnor conjecture, namely
the isomorphisms $I^r(F)/ I^{r+1}(F) \simlgr H^{r}(F, \ZZ/2 \ZZ)$ established by Orlov-Vishik-Voevodsky \cite[th. 4.1]{OVV}
(see also \cite{Mo}).

By construction, each $(n+1)$-Pfister over $F$ is hyperbolic, so that  $I^{n+1}(F)=0$.
Since $\varphi_{1,F}$ is an anisotropic $n$-Pfister form,
it defines a non-trivial class in $I^n(F)/ I^{n+1}(F)$
so that $H^{n}(F, \ZZ/2 \ZZ) \not =0$. This implies that $\mathrm{cd}(F) \geq n$.
In the other hand,  we have that $H^{n+1}(F, \ZZ/2 \ZZ)=0$.
Since $F$ is $2$--special, this implies that $\mathrm{cd}(F) \leq n$ \cite[I, \S 4, prop. 21]{Se}.
Thus  $\mathrm{cd}(F) = n$.
\end{proof}

\begin{sremark}
 {\rm  For each $n\geq 2$, there are examples of fields $k$ satisfying the assumptions of Proposition
\ref{prop_skip}.
Let $k_0$ be a field having an anisotropic $(n-1)$--Pfister form $\varphi_0$ (e.g $k_0=\mathbb{R}$).
Put $k= k_0(t_1,t_2)$, $\varphi_1= \langle 1, -t_1 \rangle \otimes \varphi_{0,k}$,
 $\varphi_2=  \langle 1, -t_2 \rangle \otimes \varphi_{0,k}$. Then $\varphi_1$ and
$\varphi_2$ are anisotropic (e.g. by  Springer criterion over $k((t_1))((t_2))$, see \cite[prop. VI.1.9]{La}).
We have $\varphi_1 \perp - \varphi_2 =  \langle 1, -1 \rangle \otimes \varphi_{0,k} \perp
\langle t_1, -t_2 \rangle \otimes \varphi_{0,k}$.
Since $\langle t_1, -t_2 \rangle \otimes \varphi_{0,k}$ is anisotropic, we have that
$(\varphi_1 \perp - \varphi_2)_{an}=\langle t_1, -t_2 \rangle \otimes \varphi_{0,k}$
is of dimension $2^n$.
}
\end{sremark}

\bigskip

\bigskip

\medskip

\end{document}